\newtheorem{theorem}{Theorem}[section]
\newtheorem{prop}[theorem]{Proposition}
\newtheorem{lemma}[theorem]{Lemma}
\newtheorem{cor}[theorem]{Corollary}
\theoremstyle{definition}
\newtheorem{definition}[theorem]{Definition}
\newtheorem{example}[theorem]{Example}
\theoremstyle{remark}
\newtheorem{remark}[theorem]{Remark}
\numberwithin{equation}{section}
\begin{document}

\title{Invariants of stable maps from the $3$-sphere to the Euclidean $3$-space}

\author{N. B. Huaman\'{i}}
\address{Instituto de Matem\'atica y Ciencias Afines (Imca-Uni). Calle Los Bi\'ologos 245.
Lima 15012
Per\'u.}
\email{nelson.berrocal@imca.edu.pe}
\thanks{The first author was supported in part by  Fondecyt C.G. 176-2015.}


\author{C. Mendes de Jesus}
\address{Departamento de Matem\'atica, Universidade
\ Federal de Vi\c cosa,  \ 36570-000, Vi\c cosa - MG, Brasil.}
\email{cmendes@ufv.br}

\author{J.  Palacios}
\address{Instituto de Matem\'atica y Ciencias Afines (Imca-Uni). Calle Los Bi\'ologos 245.
Lima 15012
Per\'u.}
\email{jpalacios@imca.edu.pe}

\subjclass[2010]{ 57R45, 58K15, 58K65.}

%
%
%



\keywords{Stable maps, singular sets, branch sets, $3$-sphere.}

\begin{abstract}
In the present work, we study the decompositions of codimension-one transitions that alter the singular set the of  stable maps of $S^3$ into $\mathbb{R}^3,$ the topological behaviour of the singular set and the singularities in the branch set that involves cuspidal curves and swallowtails that alter the singular set. We also analyse the effects of these decompositions on the global invariants with prescribed branch sets.
\end{abstract}

\maketitle
\section*{Introduction}
The study of stable maps between manifolds is an ongoing research topic that has interested many mathematicians around the world \cite{Arnold, Eliasberg, Goryunov, Tamas, Ohmoto,  Pignoni, Saeki, Yamamoto}. In the case of stable maps from $3$-manifolds to the Euclidean space $\mathbb{R}^3$, Goryunov, in his article \textit{Local invariants of maps between 3-manifolds}  \cite{Goryunov}, classifies first-order invariants of maps between 3-manifolds whose increments in generic homotopies are defined entirely by diffeomorphism types of local bifurcations. This local study gives a step towards the global study of stable maps from compact, oriented $3$-manifolds to the space $\mathbb{R}^3$. Mendes de Jesus, Shina and Romero-Fuster \cite{MSR} introduced weighted graphs associated to stable maps, as global topological invariants, in parallel with the study of maps between surfaces done by Hacon, Mendes de Jesus and Romero-Fuster in \cite{HMR1, HMR3}. The aforementioned authors studied the global point of view of stable maps from closed surfaces to the plane, and provided techniques for constructing maps, as well as introducing graphs with weights on its vertices associated to stable maps between surfaces, as global invariants. Moreover, these invariants improve the ones determined by Aicardi and Ohmoto \cite{Ohmoto} classifying stable maps from closed surfaces to the two-dimensional plane from a global point of view. These works are based on Arnold's work \cite{Arnold}, where he introduced invariants for stable embeddings of $S^1$ into $\mathbb{R}^2$, using the techniques of \textit{Vassiliev's invariants} \cite{Vassiliev}. In parallel,  Yamamoto \cite{Yamamoto} determined  semi-local invariants of stable maps of 3-manifolds to the plane $\mathbb{R}^2$. In  \cite{SR,SR2}, Sinha and Romero-Fuster obtained other results, from both the local and global point of view, for maps from $3$-manifolds to the space $ \mathbb{R}^3$, also paying attention to the fold maps.

The main objective of this work is to study in more detail the stable maps from $S^3$ to $\mathbb{R}^3$, aiming to complement this particular case to the results  given in \cite{MSR, SR}. More precisely, we intend to describe the topological behaviour of the singular set and the singularities in the branch set, that involves the components of a singular set, cuspidal curves and swallowtails (see Definition \ref{def01}). We study the decompositions of the codimension-one transitions, listed in \cite{Goryunov}, that alter the singular set, and present some results related to the effects of these decompositions on the global invariants of maps from the $3$-sphere to the $3$-dimensional Euclidean space.  

In Section \ref{sec2}, we present a summary of the concepts of stable maps from a $3$-manifold to $\mathbb{R}^3$, with the definition of five global invariants of stable maps. In Section \ref{sec3}, we introduce the decomposition of codimension-one transitions and their respective properties from a global point of view. In Section \ref{sec4}, we present the relationship between the above global invariants and its consequences on the construction of stable maps and fold maps with a prescribed singular set.  

The main result of this work is Theorem \ref{teoA} and its consequences.

\section{Stable maps of $3$-manifolds into $\mathbb{R}^3$}
\label{sec2}

Unless otherwise specified, a {\em manifold} will mean a smooth manifold and a {\em map} between manifolds will be a smooth map.  

Let $M$ be a $3$-manifold. The set $C^{\infty} (M,\mathbb{R}^3)$ is the space of (smooth) maps from $M$ to $\mathbb{R}^3$.  Two maps $f,g \in C^{\infty} (M,\mathbb{R}^3)$   are {\it $\mathcal{A}$-equivalent} whenever there are two diffeomorphisms $\phi \colon M\longrightarrow M$ and $\psi\colon \mathbb{R}^3 \longrightarrow \mathbb{R}^3$ such that $ g = \psi \;\circ\; f \;\circ\; \phi^{-1}$. 
A map  $f \in C^{\infty} (M,\mathbb{R}^3)$ is {\it stable}, if every map sufficiently close to $f$ (in the Whitney $C^{\infty}$-topology) is equivalent to $f$. Acording to Whitney \cite{W},  the set of all stable maps, denoted by $\mathcal{E}(M,\mathbb{R}^3)$,  is open and dense in $C^{\infty} (M, \mathbb{R}^3)$.  A point  $x$ in $M$ is  {\it  regular} with respect to a map $f$ in $C^{\infty} (M,\mathbb{R}^3)$  if $f$ is a local difeomorphism around a neighborhood of $x$, otherwise $x$ is called a \textit{singular point}. If $f$ is a generic map, then its singular set has codimension $2$ and it is formed by submanifolds of dimensions zero, one and two, see \cite{ROS}. In \cite{Gib},  it is shown that the normal forms of the germs at the singular points of a stable map $f$ are the following:
\begin{itemize}
\item[a)]  $A_{1}$:   a \textit{double point},  $(x,y,z)\mapsto (x^2,y,z)$;
\item[b)] $A^{\pm}_{2}$; a  \textit{cusp point},  $(x,y,z)\mapsto (\pm x^{3}+yx,y,z)$;
\item[c)] $A^{\pm}_{3}$:  a \textit{swallowtail point}, $(x,y,z)\mapsto (\pm x^{4}+yx^{2}+zx,y,z)$.
\end{itemize}
If $M$ is a closed, oriented $3$-manifold,
then the \textit{singular set} of $f$, denoted by $\Sigma f$,  is formed by the following: ($i$) a disjoint union of closed, oriented surfaces embedded in $M$; ($ii$) curves consisting of cusp points, in which isolated swallowtail points might exist; and ($iii$) fold points. The surfaces in ($i$) separate the regular   components of $f$ whose boundaries are contained in $\Sigma f$. 
\begin{figure}[htp]
\vspace{-0.3cm}
$$ \epsfxsize=11cm \epsfbox{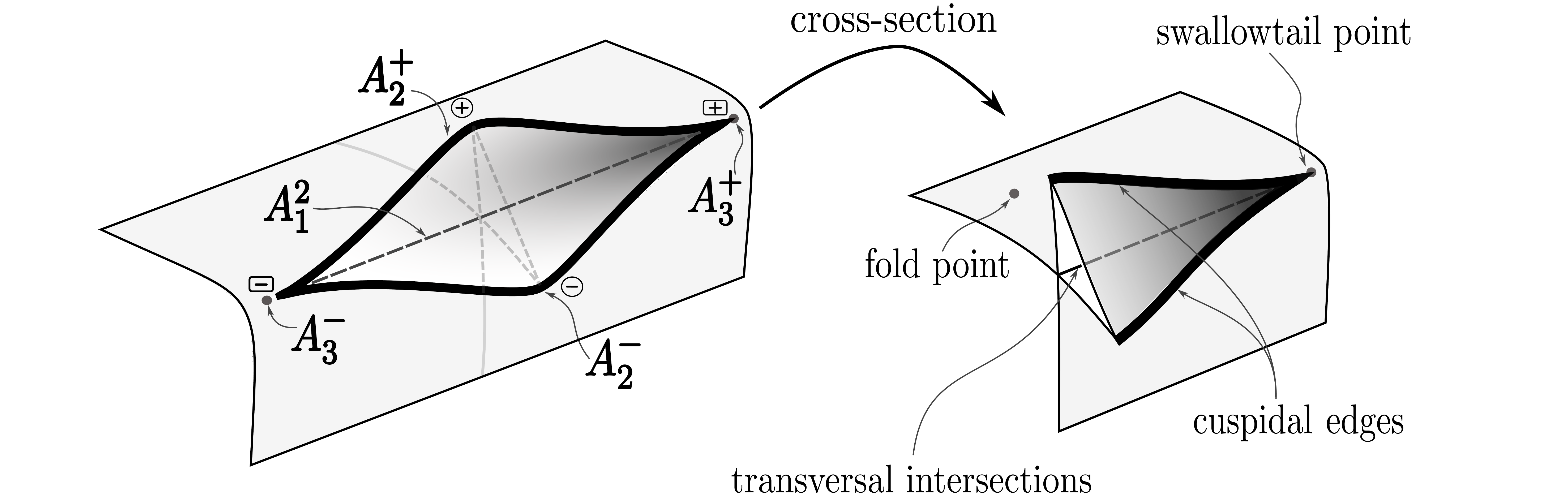} $$
\vspace{-0.8cm}
\caption{Examples of cuspidal and swallowtail curves. \label{coladegolondrina2}}
\end{figure}
The \textit{branch set} of $f$ is the image $f(\Sigma f)$ of the singular set. It is formed by a collection of closed, oriented surfaces embedded in $\mathbb{R}^3$ possibly with transversal intersections and singularities corresponding to finitely many cuspidal edges and isolated swallowtail points 
 (see Figure \ref{coladegolondrina2}).  The branch set of $f$ have the following auto-intersections (see Figure \ref{Intersection1}):
 
\begin{itemize}
\item[1.]  $A^{2}_{1}$: transversal intersection  of two smooth sheets,
\item[2.] $A^{\pm}_{2} A_{1}$:  transversal intersection of a cuspidal edge with a regular sheet,
\item[3.] $A^{3}_{1}$:  isolated threefold points obtained by the transversal intersection of three sheets.
\end{itemize}
\begin{figure}[htp]
\vspace{-0.63cm}
$$ \epsfxsize=10.9cm \epsfbox{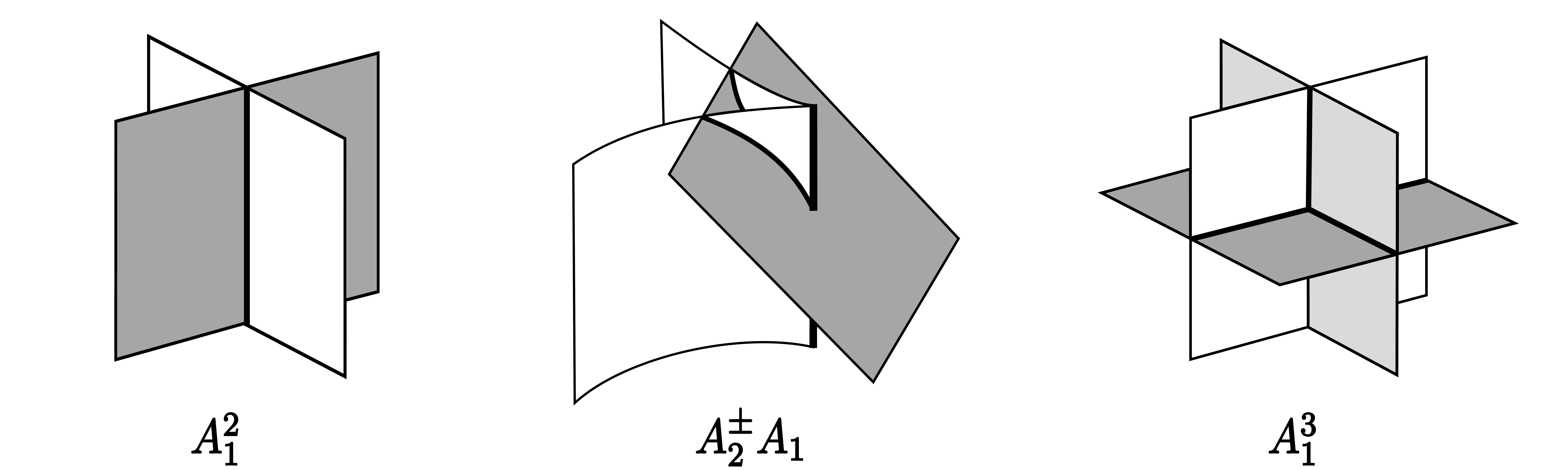} $$
\vspace{-0.9cm}
\caption{Points of type $A^{2}_{1},\;A^{\pm}_{2} A_{1}$ and $A^{3}_{1}$  . \label{Intersection1}}
\end{figure}
Suppose that the image $f(S_i)$ of a singular surface $S_i\subset \Sigma f$ is a compact connected surface embedded in $\mathbb{R}^3$. One might associate it with an {\em inward} (resp. {\em outward direction}) if the image of regular points in a sufficiently small tubular neighbourhood $\mathcal{V}$ of $S_i$ in $M$ such that $\mathcal{V}\cap \Sigma f =S_i$ are in the inner (resp. outer) region of $f(S_i)$. These directions will be represented by an inward or outward segment perpendicular to $f(S_i)$ according to the case, see Figure \ref{apliS3tor3E3m}.

 For example, let us consider a canonical projection $\pi\colon S^3\longrightarrow \mathbb{R}^3$, so that  $\Sigma \pi=S^2$ and its image $\pi(\Sigma \pi)=\pi(S^2)=S^2$ is also a $2$-sphere. Since the image of the regular points lies in the inner region of $S^2$, it has an inward direction.  
\begin{definition}\label{def01}
A \textit{cuspidal curve} is a closed curve consisting of cusp points (see $f_2$ and $ f_3 $ in Figure \ref{apliS3tor3b}). 
\end{definition}

If $f$ is a stable map from a compact, oriented $3$-manifold $M$ to $\mathbb{R}^3$, then the number of singular surfaces in $M$, cuspidal curves, and swallowtail points in $f(\Sigma f)$ are all finite. This motivates the following:
\begin{definition} 
Let $f$ be a stable map from a compact, oriented $3$-manifold $M$ to $\mathbb{R}^3$. The {\em global invariants} of $f$ are: 
\begin{itemize}
\item[]$I_E(f):$ the number of singular surfaces, 
\item[]$I_V(f):$ the number of regular components, 
\item[]$I_C(f):$ the number of cuspidal curves,
\item[]$I_G(f):$ the sum of the genus of all singular surfaces,
\item[]$I_S(f):$ the number of swallowtail points.
\end{itemize}
Sometimes we write $I_E$ instead of $I_E(f)$ if no confusion arises, similarly for the other invariants.
\end{definition}

If $M=S^3$, then $I_V=I_E+1$, which is a consequence of the Jordan-Brouwer theorem, since $\Sigma f$ is a disjoint union of closed, oriented surfaces embedded in  $S^3$. Therefore, we shall only consider the invariants $I_E,I_C,I_G,I_S$.
\begin{figure}[htp]
\vspace{-0.35cm}
$$ \epsfxsize=11.8cm \epsfbox{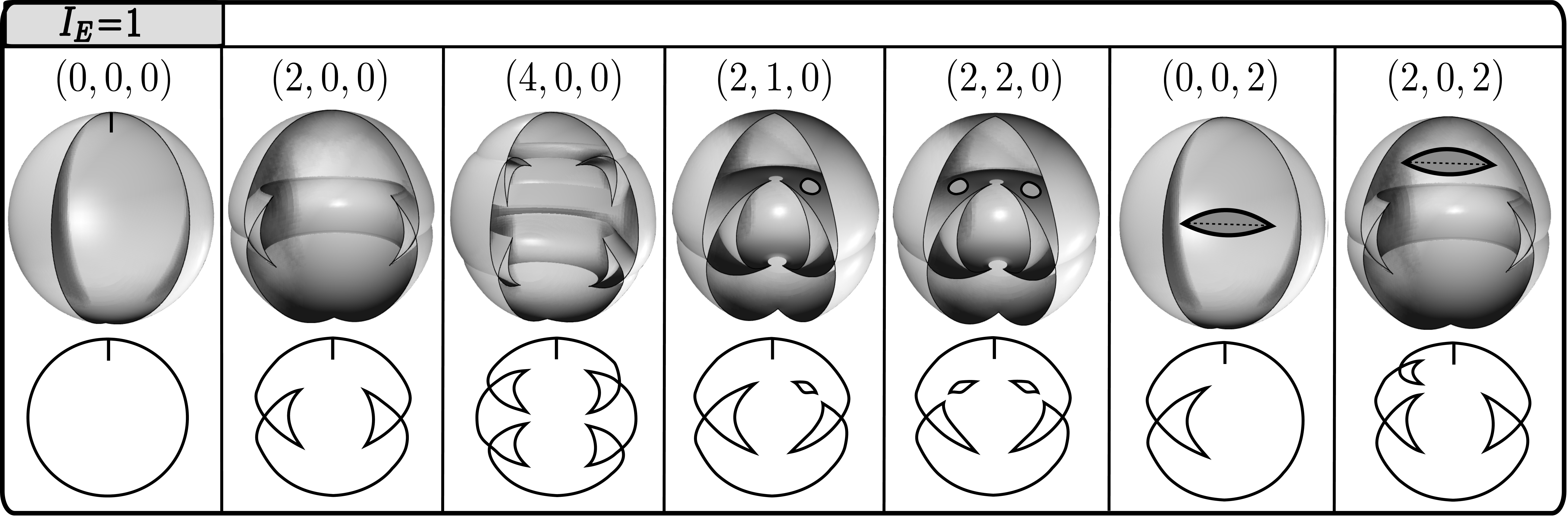} $$
\vspace{-0.6cm}
$$ \epsfxsize=11.8cm \epsfbox{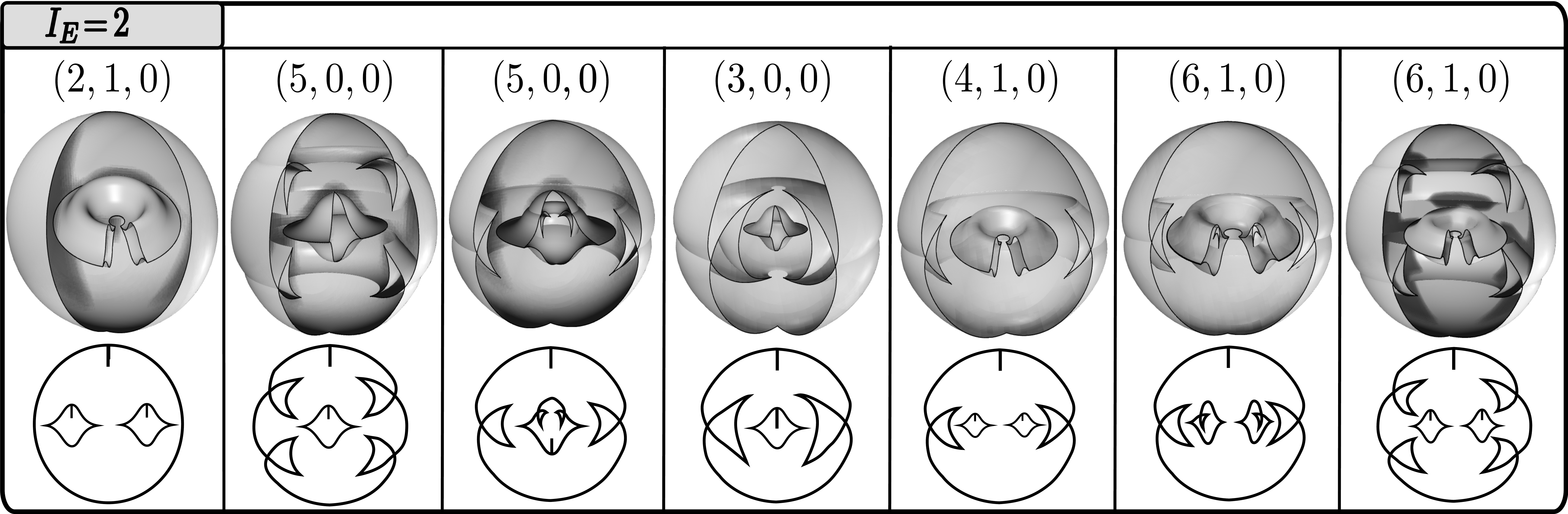} $$
\vspace{-0.5cm}
$$ \epsfxsize=11.8cm \epsfbox{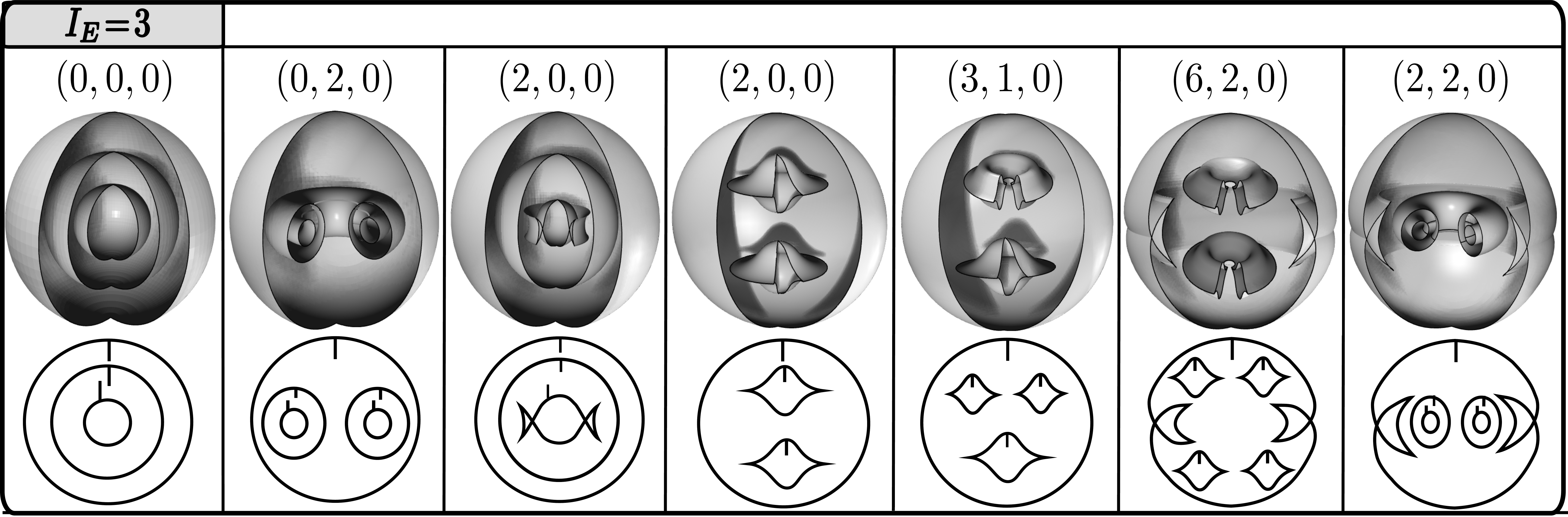} $$
\vspace{-0.8cm}
\caption{Examples of stable maps from $S^3$ to $\mathbb{R}^3$ with $I_E\leq 3$. \label{apliS3tor3E3m}}
\end{figure}
\begin{example} 
The Figure \ref{apliS3tor3E3m} shows the branch set of stable maps from $S^3$ to $\mathbb{R}^3$ with $I_E\leq 3$, in which the triplet of integers means the triplet $(I_C,I_G,I_S)$. For $I_E=1$, the stable maps have $I_C\leq 4$, $I_G\leq 2$ and $I_S\leq 2$;
for $I_E=2$, the stable maps have  $I_C\leq 6$, $I_G\leq 1$  and $I_S=0$; and
for  $I_E=3$, the stable maps have $I_C\leq 6$, $I_G\leq 2$ and $I_S=0$.
\end{example}

\section{Codimension-one transitions}
\label{sec3}
The complement of the set $\mathcal{E}(M,\mathbb{R}^3)$ in  $C^{\infty} (M,\mathbb{R}^3)$ is called {\em discriminant set}.  
Let us consider a homotopy $F\colon M\times [a,b]\longrightarrow \mathbb{R}^3$, $(x,t)\mapsto F(x,t)=F_{t}(x)$, between two stable maps $f,f'\colon M\longrightarrow \mathbb{R}^3$. 
As $t$ varies in $[a,b]$, the branch set of $F_{a}=f$ is continuously deformed into the branch set of $F_{b}=f'$. A map $F_{t_{0}}$ may lie in the discriminant set for a certain $t_{0}\in [a,b]$. For instance, if $f$ and $f'$ are $\mathcal{A}$-equivalent, then there does not necessarily exist a time $t_{0}\in [a,b]$ such that $F_{t_{0}}$ lies in the discriminant set. However, if $f$ and $f'$ are not $\mathcal{A}$-equivalent, then there exists at least a time $t_{0}\in [a,b]$ such that $F_{t_{0}}$ lies in the discriminant set.  

Every map in $\mathcal{E}(M,\mathbb{R}^3)$ has codimension $0$ in the space $C^{\infty} (M,\mathbb{R}^3)$, and the discriminant set is formed by unstable maps of codimension greater than or equal to $1$, see \cite{Goryunov}. 
Goryunov lists the codimension-one transitions in Figure $3$ of {\em loc.cit.} (see Figure \ref{transicionesoriginales1}). We shall see in the next section that some of these transitions alter the topology of the singular set, whereas the others alter the number of cuspidal curves and swallowtail points. Every homotopy class of maps in $C^{\infty}(M,\mathbb{R}^3)$ is path-connected, hence there is a continuous path in $C^{\infty}(M,\mathbb{R}^3)$ joining two maps that are in two different classes of $\mathcal{A}$-equivalences and meeting the discriminant set through finitely many codimension-one maps, called {\em codimension-one transitions}. 
\begin{figure}[htp]
\vspace{-0.3cm}
$$ \epsfxsize=12cm \epsfbox{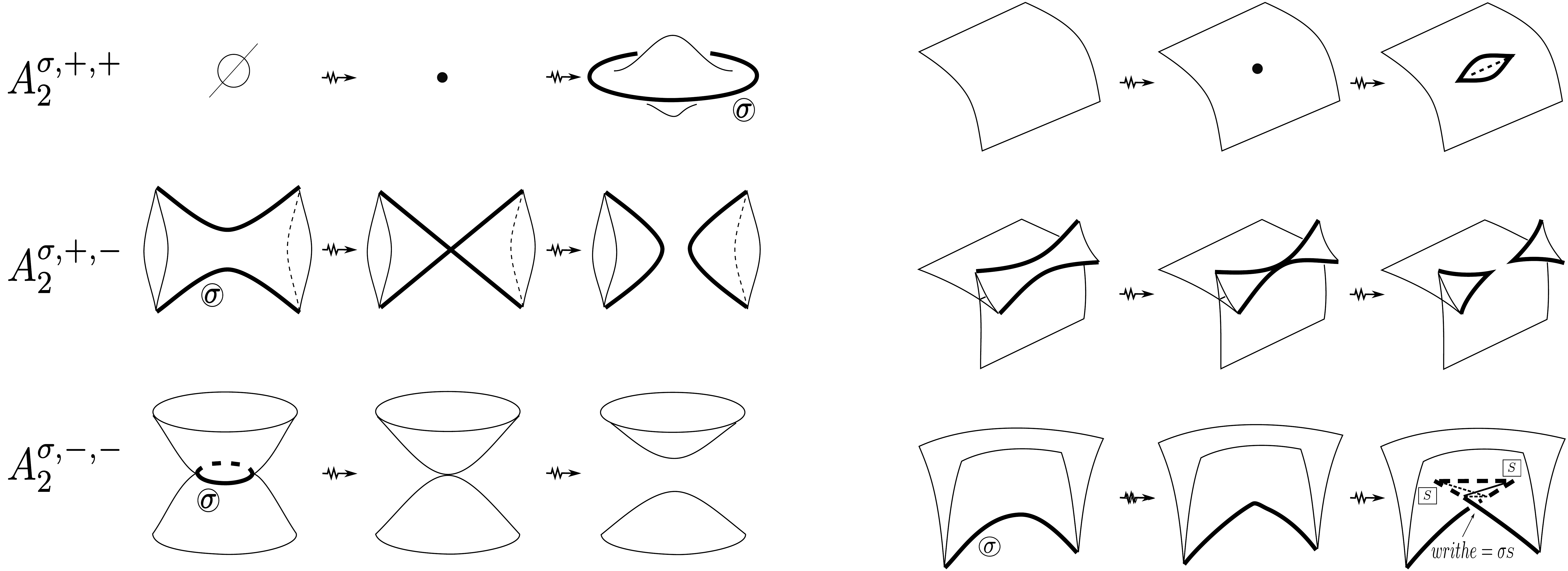} $$
\vspace{-0.8cm}
\caption{Codimension-one transitions. \label{transicionesoriginales1}}
\end{figure}
Figure \ref{transicionesoriginales1} illustrates the transitions $A^{\sigma,+,+}_{2},$ $ A^{\sigma,+,-}_{2}$, $ A^{\sigma,-,-}_{2}$, $A^{e}_3$ and $A^{h}_3$ described in \cite{Goryunov}, which are the transitions altering the number of cuspidal curves, and they might also alter  the topology of the singular set and the number of swallowtail points.  
For convenient purposes, the three first transitions that alter the topology of the singular set will be denoted by $L$, $B$ and $P$, respectively.
We shall see in more detail the global properties of the transitions $L,B,P,A^{e}_3,A^{h}_3$, and subdivide each one of them if necessary in order to distinguish the global properties that may happen when a path joining two stable maps goes through these transitions.  

\begin{definition} 
 Let $f$ be a map obtained from a map $f'$ after passing through a codimension-one transition  
 $T\in \{L, B , P,A^e_3, A^h_3\}$. 
 A transition $T$ has {\em positive} (resp. {\em negative}) direction, in the following cases:

\begin{itemize}
\item[$L$] has {\em positive} (resp. {\em negative}) direction, if $f$ has exactly one cuspidal curve more (resp. less) than $f'$.

\item[$B$] has {\em positive} (resp. {\em negative}) direction, if $f$ has exactly a cuspidal curve and one singular surface more (resp. less) than $f'$; or if $f$ has exactly one cuspidal curve less (resp more) than $f'$, and the difference between the genus of singular surfaces of $f$ and $f'$ is $-1$ (resp. $1$). 

\item[$P$] has {\em positive} (resp. {\em negative}) direction, if $f$ has exactly one 
cuspidal curve less (resp. more) than $f'$.  

\item[$A^e_3,$]$A^h_3$  have {\em positive} (resp. {\em negative}) direction, if $f$ has two swallowtails more than $f'$. 
\end{itemize}
\end{definition}
\noindent
In the next paragraphs, we shall describe the behaviour and decomposition of transitions according with the alteration of the number of cuspidal curves, swallowtails and singular surfaces in positive direction, see Figure \ref{transeje1b}.
\vspace{0.5cm}

\noindent
{\boldmath $ L $} : This transition will not be decomposed. It creates a singular surface homeomorphic to the sphere, and the number of singular surface and the number of cuspidal curves both increase by one. We denote by $Q$ and $\eta$ the new singular surface and cuspidal curve, respectively (see Figure \ref{transL}). Thus,  
$$(I_E, I_C, I_G, I_S)(f)=(I_E, I_C, I_G, I_S)(f')+(1,1,0,0)\,.$$
\begin{figure}[htp]
\vspace{-0.3cm}
$$ \epsfxsize=12.3cm \epsfbox{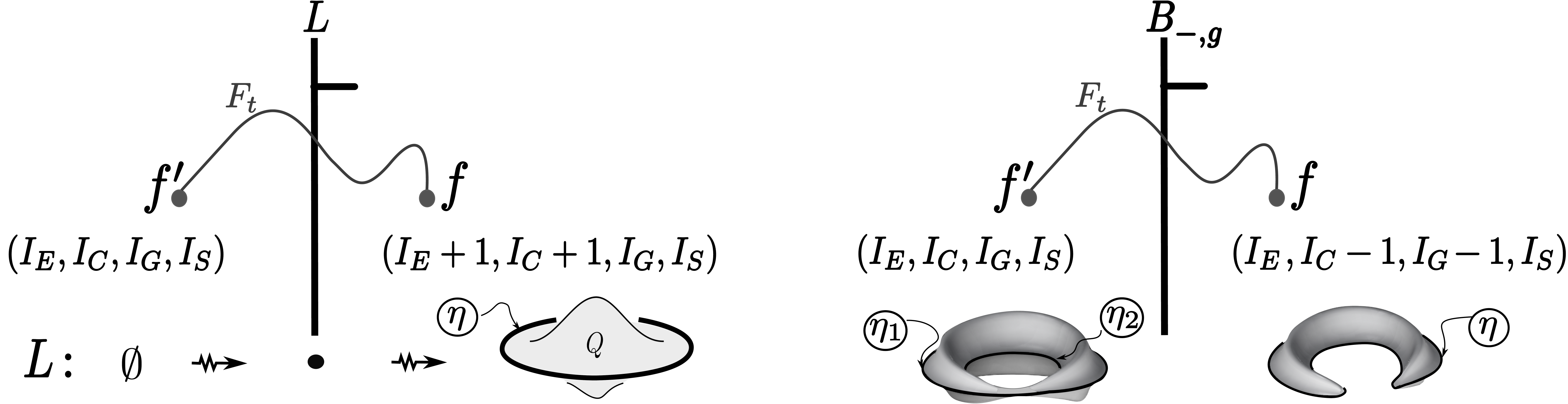} $$
\vspace{-0.8cm}
\caption{Example of transitions $L$ and $B_v$. \label{transL}}
\end{figure}
\noindent
{\boldmath $B$}: It modifies not only the number of cuspidal curves but the number of the singular surfaces and their genera. It will be decomposed into 
 $B_{+,g}$, $B_{0,g}$, $B_{-,g}$ and $B_v$ such that $B=B_{+,g}+B_{0,g}+B_{-,g}+B_v$. 
Figure \ref{transeje1b} shows a local picture of the transitions in the branch set with respect to the positive direction, where:
\begin{itemize}
\item[$\bullet$] {\boldmath $ B_{-,g} $}: The cuspidal curves $\eta_1$ and $\eta_2$ in the surface $W$ join each other tangentially to become a cuspidal curve $\eta$ in a surface $Z$,  the number of cuspidal curves and the genus of a singular surface decrease by one (see Figure \ref{transL}). Thus,    
$$(I_E, I_C, I_G, I_S)(f)=(I_E, I_C, I_G, I_S)(f')+(0,-1,-1,0).$$
\end{itemize}
\begin{itemize}
\item[$\bullet$] {\boldmath $ B_{0,g} $}: Two arcs of the cuspidal edges $\eta_1$ and $\eta_2$ in the surface $W$ join each other tangentially and split into two cuspidal edges $\beta_1$ and $\beta_2$ in a surface $Z$,  the number of the genus of a singular surface decrease by one. Thus,   
$$(I_E, I_C, I_G, I_S)(f)=(I_E, I_C, I_G, I_S)(f')+(0,0,-1,0).$$
\item[$\bullet$]{\boldmath $ B_{+,g} $} : Two arcs of the cuspidal curve $\nu$ in the surface $W$ join each other tangentially and split into two cuspidal curves $\nu_1$ and $\nu_2$ in a surface $F$,  the number of cuspidal curves increases by one and the genus of a singular surface decreases by one. Thus, 
$$(I_E, I_C, I_G, I_S)(f)=(I_E, I_C, I_G, I_S)(f')+(0,1,-1,0)\,.$$
\item[$\bullet$] {\boldmath $ B_{v} $}: Two arcs of the cuspidal curve $\nu$ in the surface $W$ join each other tangentially and split into two cuspidal curves $\nu_1$ and $\nu_2$ in the surfaces $U_1$ and $U_2$ respectively. The number of  singular surfaces and the number of cuspidal curves both increase by one. Thus, 
$$(I_E, I_C, I_G, I_S)(f)=(I_E, I_C, I_G, I_S)(f')+(1,1,0,0)\,.$$
\end{itemize}
\noindent
{\boldmath $ P $}: This transition modifies the number of cuspidal curves, singular curves and the genus of singular surfaces. It will be decomposed into $P_g$ and $P_v$ such that $P=P_g+P_v$, where:  
\begin{itemize}
\item[$\bullet$] {\boldmath $ P_g $} : It eliminates the cuspidal curve $\eta$ and hole in the surface $W$, obtaining a new singular surface $D$. Thus,  
$$(I_E, I_C, I_G, I_S)(f)=(I_E, I_C, I_G, I_S)(f')+(0,-1,-1,0).$$
\item[$\bullet$] {\boldmath $ P_v $} : It eliminates the cuspidal curve $\eta$ by shrinking it in order to decompose the surface $W$ into two new singular surfaces, obtaining new singular surfaces $K_1$ and $K_2$ (see Figure \ref{transBP}). Thus,   
$$(I_E, I_C, I_G, I_S)(f)=(I_E, I_C, I_G, I_S)(f')+(1,-1,0,0).$$
\end{itemize}
\begin{figure}[htp]
\vspace{-0.3cm}
$$ \epsfxsize=10cm \epsfbox{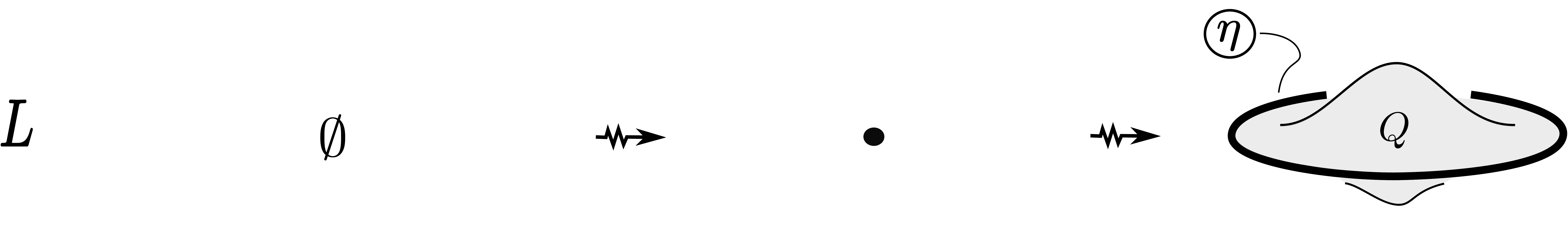} $$
\vspace{-0.44cm}
$$\epsfxsize=10cm \epsfbox{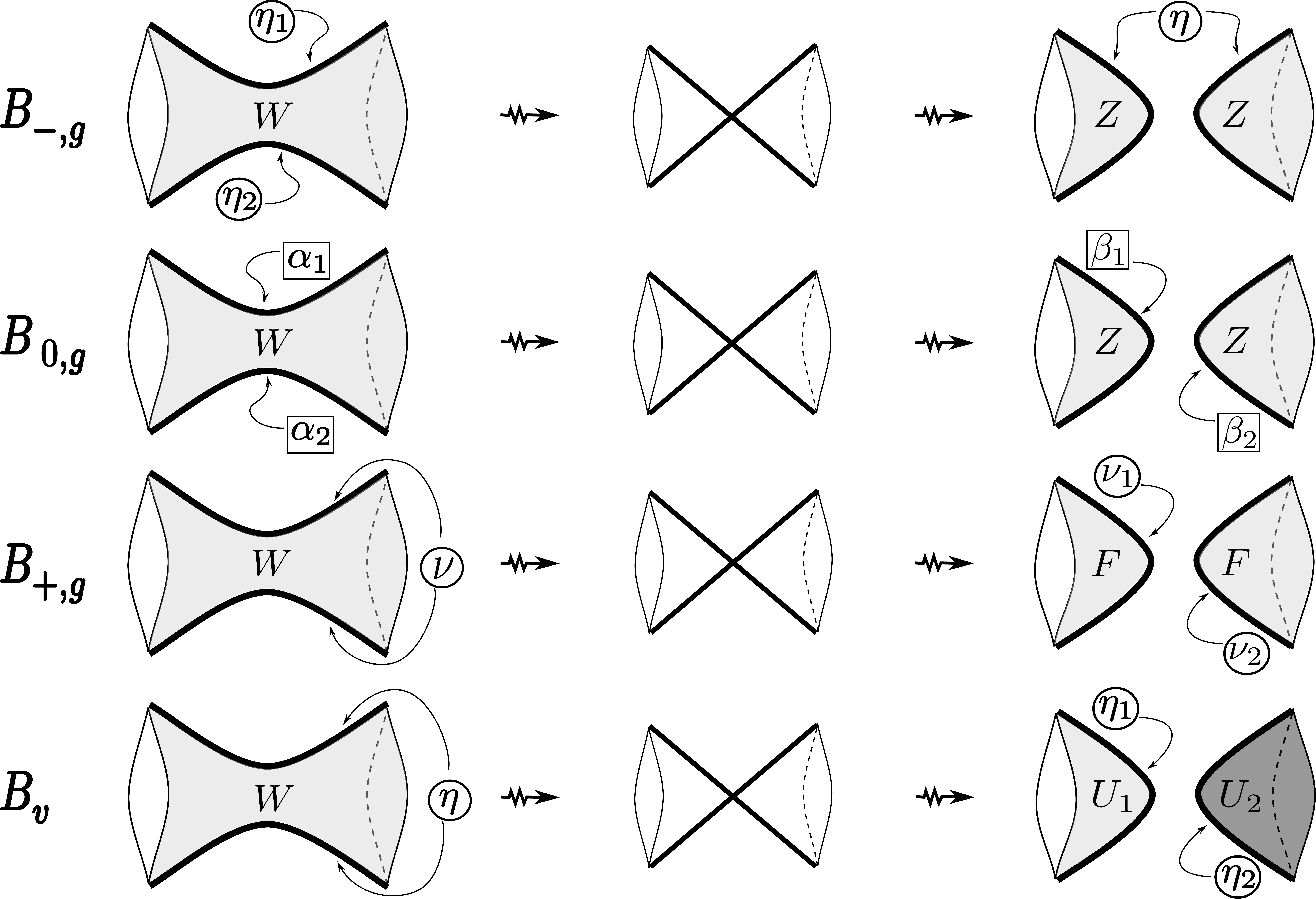} $$
\vspace{-0.44cm}
$$ \epsfxsize=10cm \epsfbox{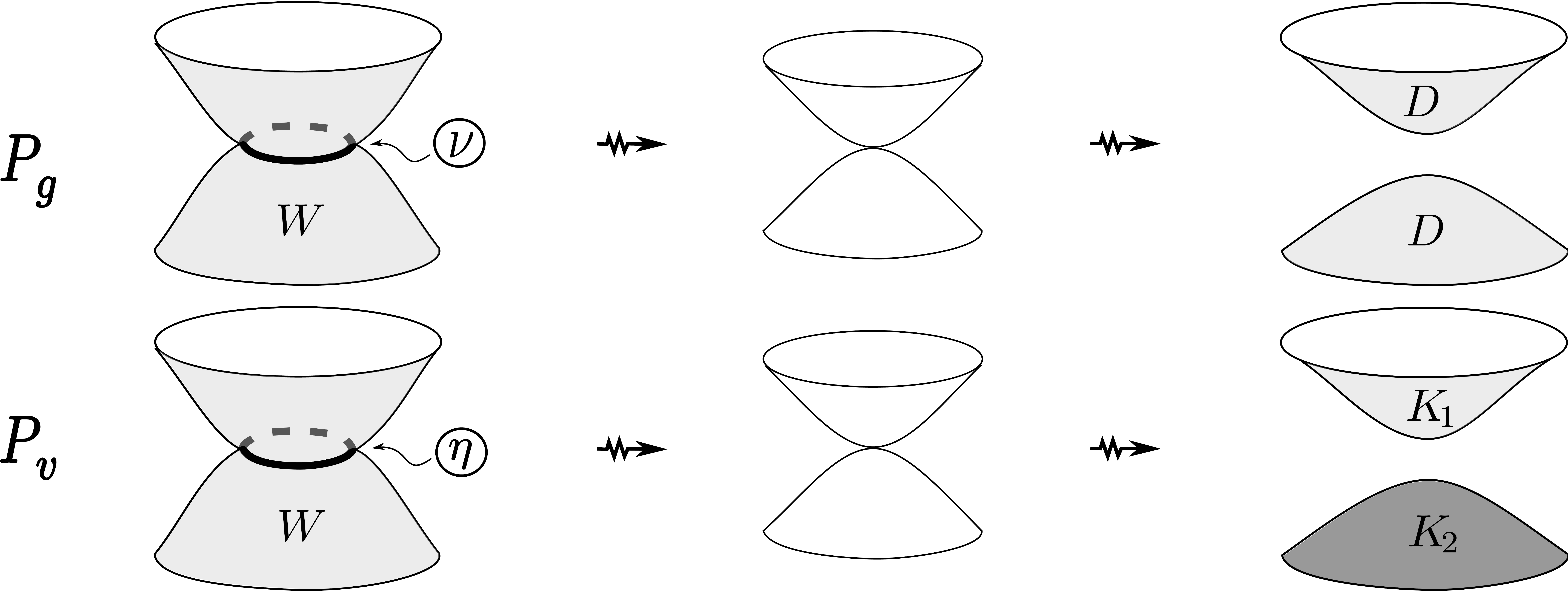} $$
\vspace{-0.44cm}
$$ \epsfxsize=10cm \epsfbox{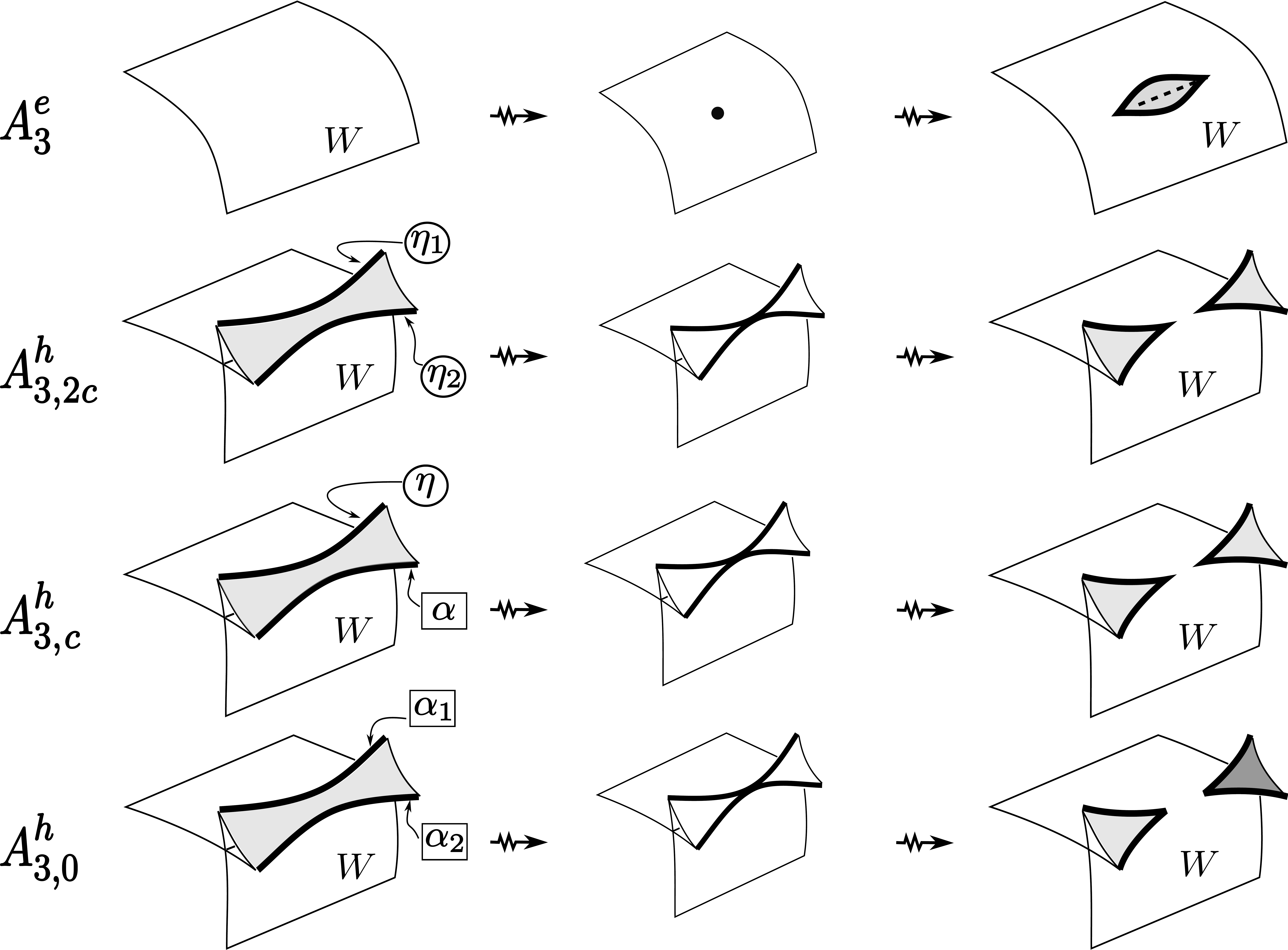} $$
\vspace{-0.7cm}
\caption{Decomposition of the transitions $B,$ $P$ and $A_{3}^{h}$. \label{transeje1b}}
\end{figure}
\noindent  {\boldmath $ A_{3}^{e} $} : This transition will not be decomposed. It creates two swallowtails in a region of a singular surfaces formed by fold points, the cuspidal lip gives birth to two new swallowtails. Thus, 
$$(I_E, I_C, I_G, I_S)(f)=(I_E, I_C, I_G, I_S)(f')+(0,0,0,2).$$
\noindent  {\boldmath $ A_{3}^{h} $} : This transition alters the number of swallowtails and the number of cuspidal curves. It will be decomposed into   
 $A_{3,2c}^{h},A_{3,c}^{h}$ and $A_{3,0}^{h}$ such that $A_{3}^{h}=A_{3,2c}^{h}+A_{3,c}^{h}+A_{3,0}^{h}$, where: 
\begin{itemize}
\item[$\bullet$]{\boldmath $ A_{3,2c}^{h} $} : Two arcs of the cuspidal curves $\eta_1$ and $\eta_2$ are joined tangentially in the surface $W$. The number of cuspidal curves decreases by two, and two new swallowtails are born. Thus, 
$$(I_E, I_C, I_G, I_S)(f)=(I_E, I_C, I_G, I_S)(f')+(0,-2,0,2).$$
\item[$\bullet$]{\boldmath $ A_{3,c}^{h} $} : An arc of a cuspidal curve $\eta$ and a cuspidal edge $\alpha$ are joined tangentially in the surface $W$. The number of cuspidal curves decreases by one, and two new swallowtails are born. Thus, 
$$(I_E, I_C, I_G, I_S)(f)=(I_E, I_C, I_G, I_S)(f')+(0,-1,0,2).$$
\begin{figure}[htp]
\vspace{-0.9cm}
$$ \epsfxsize=12cm \epsfbox{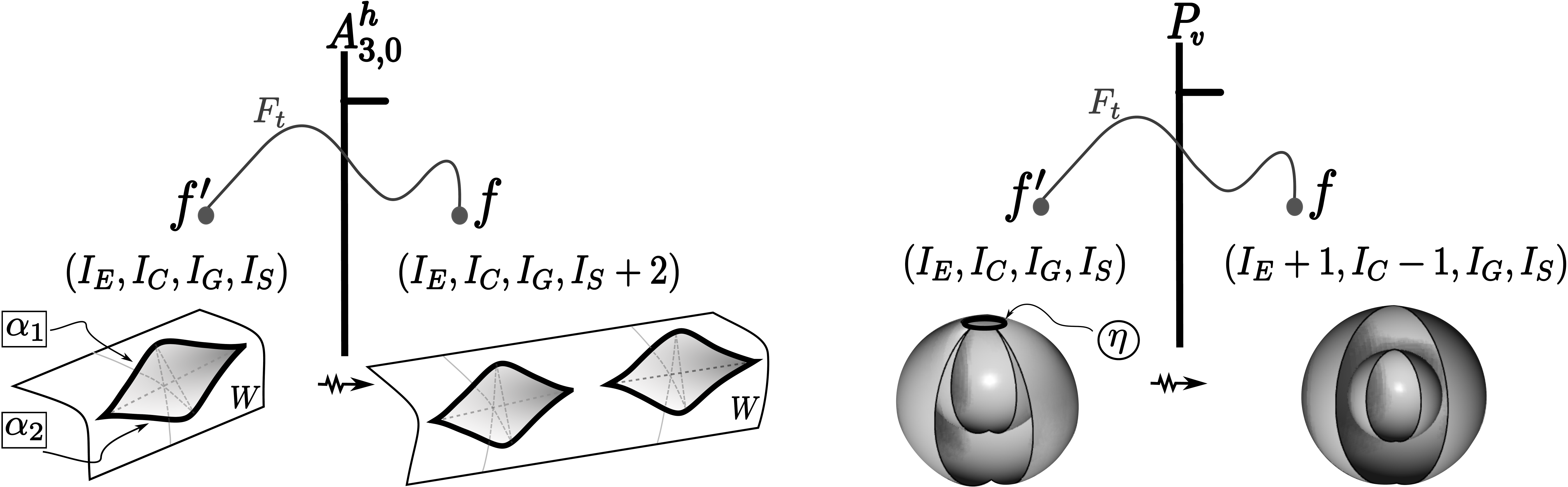} $$
\vspace{-0.8cm}
\caption{Example of transitions $A^h_{3,0}$ and $B_{-,g}$. \label{transBP}}
\end{figure}
\item[$\bullet$]{\boldmath $ A_{3,0}^{h} $} :  Two arcs of the cuspidal edges $\alpha_1$ and $\alpha_2$ are joined tangentially. The number of swallowtails increases by two (see Figure \ref{transBP}). Thus,  
$$(I_E, I_C, I_G, I_S)(f)=(I_E, I_C, I_G, I_S)(f')+(0,0,0,2).$$
\end{itemize}
A local picture of these three transitions is shown in Figure \ref{transeje1b}. 

We write 
\begin{equation}\label{transall}
\mathcal{T}=\{L, B_{-,g}, B_{0,g}, B_{+,g}, B_v , P_g, P_v,A^e_3, A^h_{3,2c}, A^h_{3,c}, A^h_{3,0}\}\,.
\end{equation}
The effects of the transitions in negative direction alter the cuspidal curves, singular surfaces and swallowtails exactly in the opposite way to what positive transitions do, as the case may be.
\begin{table}[htp]
\vspace{-0.3cm}
$$ \epsfxsize=12cm \epsfbox{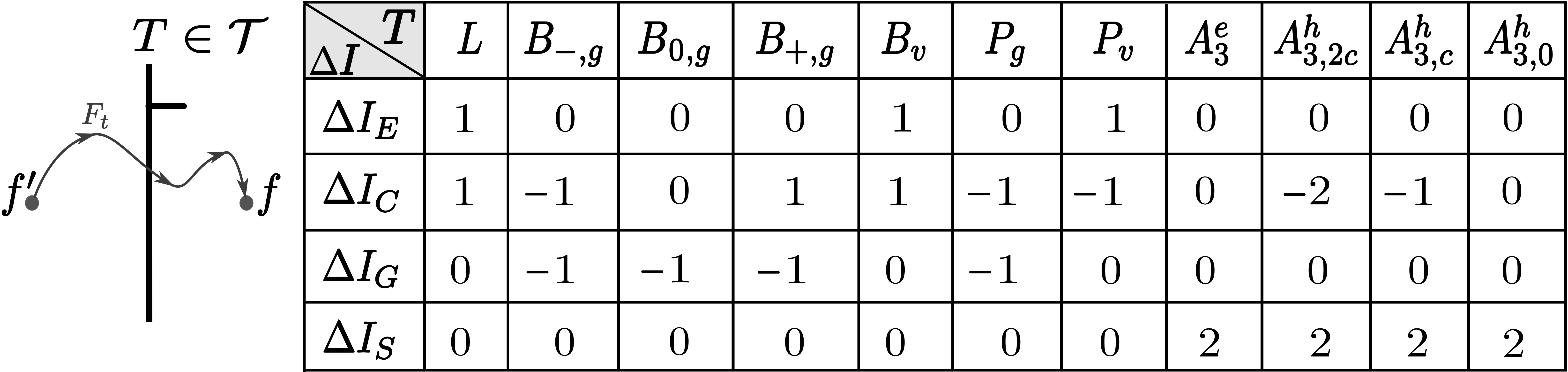} $$
\vspace{-0.35cm}
\caption{Increments of the invariants of the transitions $T\in \mathcal{T}$ in positive direction.\label{tablaLBPAeh} }
\end{table}
\vspace{-0.5cm}
\\
In Table \ref{tablaLBPAeh}, we collect the effects of all modifications of the invariants $I_E, I_C, I_G, I_S$ through the transitions in $\mathcal{T}$. If a map $f$ is obtained from a map $f'$, through a path $F_t$ crossing a transition $T$ in positive direction, we write $\Delta I_E=I_E(f)-I_E(f')$
for the increment of singular surfaces at the transition $T$. Similarly, we define the increments $\Delta I_C$, $\Delta I_G$ and $\Delta I_S$ at the transition $T$. On the other hand, $f'$ is obtained from $f$ through the path $F_{-t}$ crossing the transition $T$ in negative direction, in this case we have $\Delta I_E=I_E(f')-I_E(f)$. 
\begin{figure}[htp]
\vspace{-0.35cm}
$$ \epsfxsize=10.4cm \epsfbox{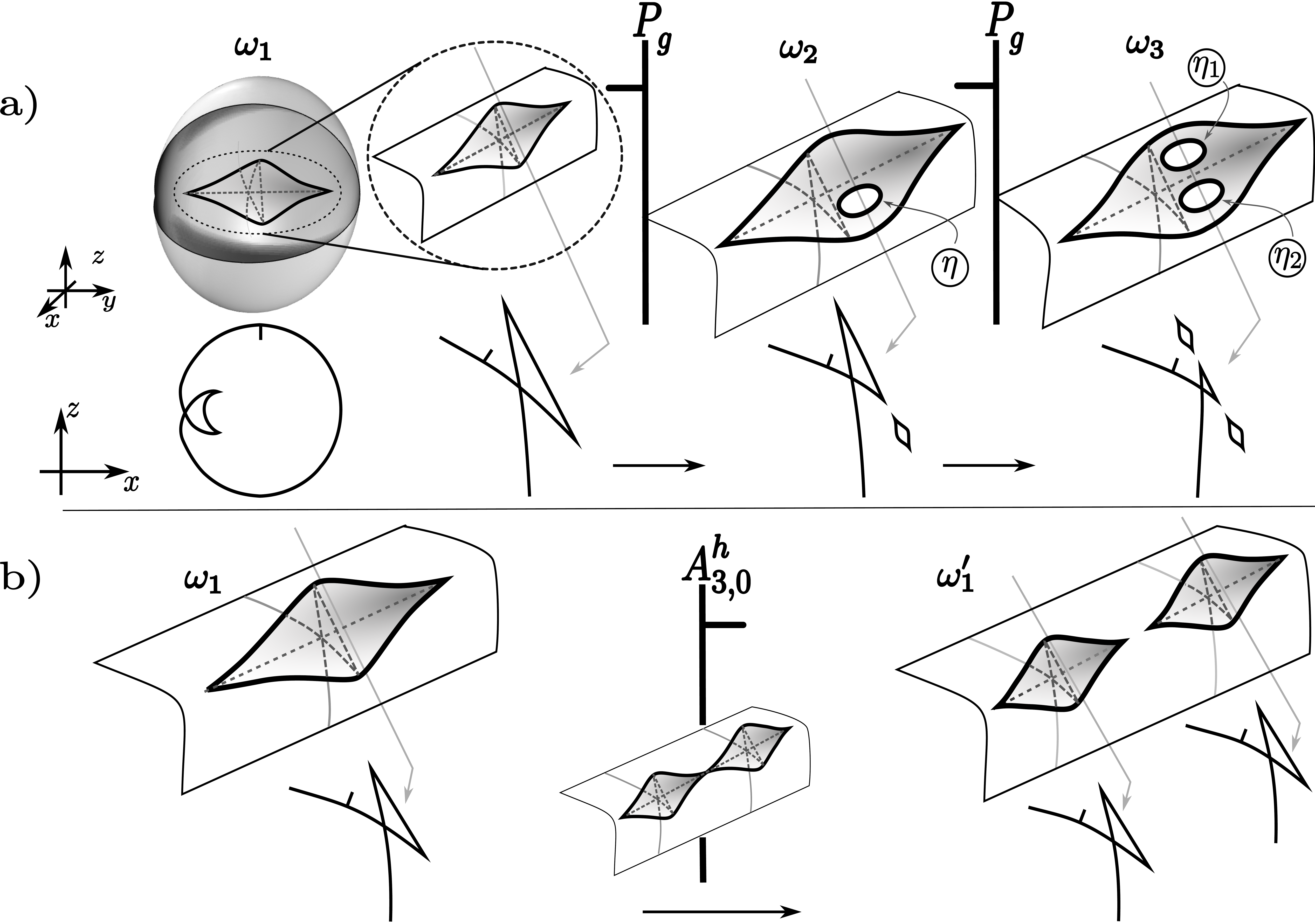} $$
\vspace{-0.6cm}
$$ \epsfxsize=10.4cm \epsfbox{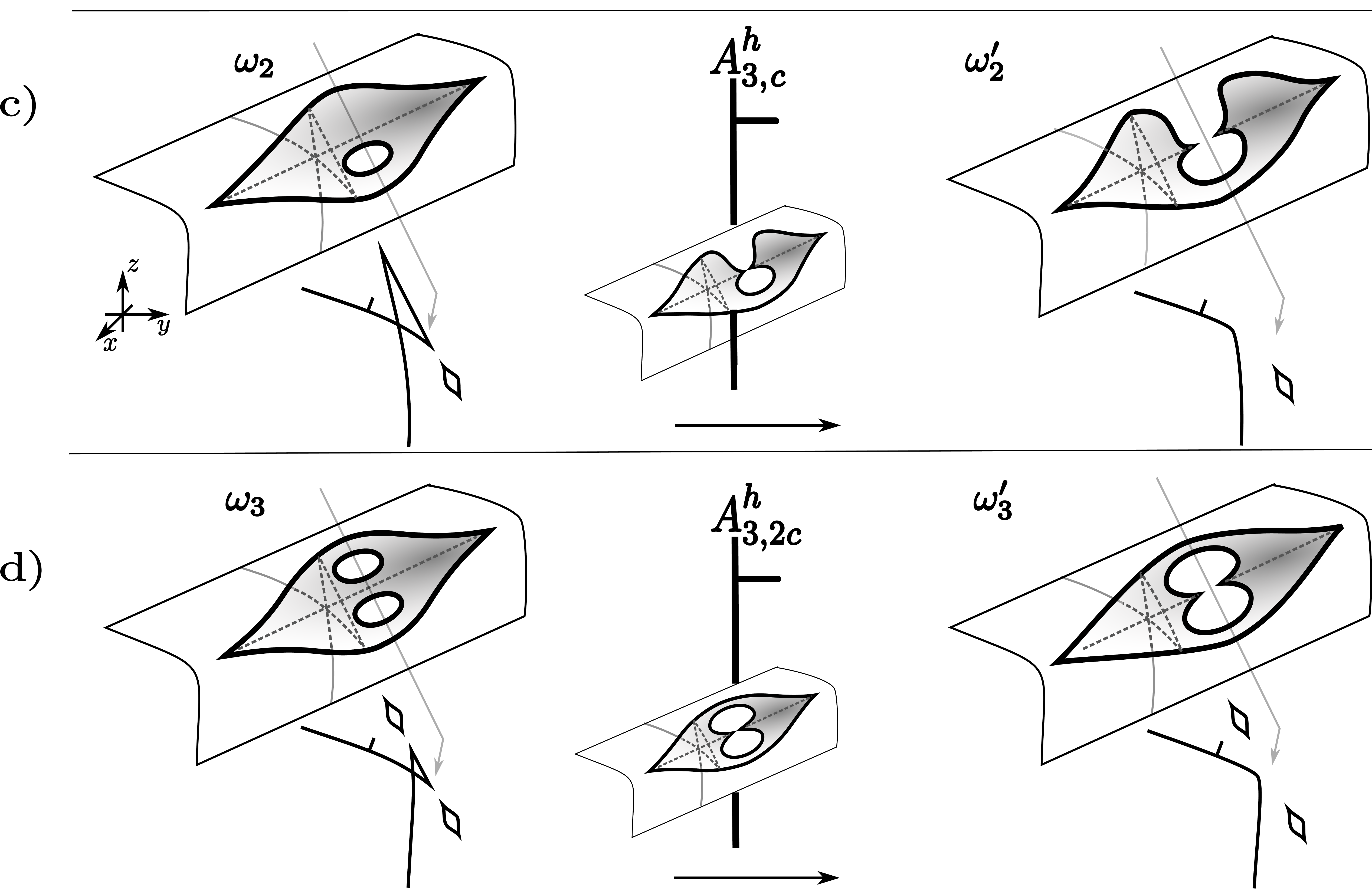} $$
\vspace{-0.8cm}
\caption{Example of local transitions  $P_g,$ $ A_{3,0}^{h},$ $A_{3,c}^{h} $ and $A_{3,2c}^{h}$. \label{transcolagolondrina1}}
\end{figure}

Figure \ref{transcolagolondrina1} shows the increments of the number of swallowtails  $I_S$ and cuspidal curves $I_C$. The curves underneath the branch sets are their cross-sections. Such curves have a perpendicular segment representing the direction of the singular surface. The horizontal arrows underneath the vertical segments indicate the positive direction of the transitions. One has the following: 
\begin{itemize}
\item[a)] The map $\omega_1$ is obtained from a canonical projection of $S^3$ onto $\mathbb{R}^3$, such that the path joining them crosses the transition $A^e_3$ in positive direction. The map $\omega_2$ is obtained from the map $\omega_1$, such that the path joining them crosses the transition $P_g$ in negative direction. Similarly, one obtains the map $\omega_3$ from the map  $\omega_2$. Then $I_C(\omega_3)=I_C(\omega_1)+2$ and $I_S(\omega_3)=I_S(\omega_1)$.
\item[b)] The map $\omega'_1$ is obtained from the map $\omega_1$, such that the path joining them crosses the transition $A^h_{3,0}$ in positive direction. Then $I_S(\omega'_1)=I_S(\omega_1)+2$.
\item[c)] The map $\omega'_2$ is obtained from the map $\omega_2$, such that the path joining them crosses the transition $A^h_{3,c}$ in positive direction. Then $I_C(\omega_2)=I_C(\omega'_2)+1$ and $I_S(\omega'_2)=I_S(\omega_2)+2$.
\item[d)] The map $\omega'_3$ is obtained from the map $\omega_3$, such that the path joining them crosses the transition $A^h_{3,2c}$ in positive direction. Then $I_C(\omega_3)=I_C(\omega'_3)+2$ and $I_S(\omega'_3)=I_S(\omega_3)+2$.
\end{itemize}

\begin{figure}[htp]
\vspace{-0.4cm}
$$ \epsfxsize=11.1cm \epsfbox{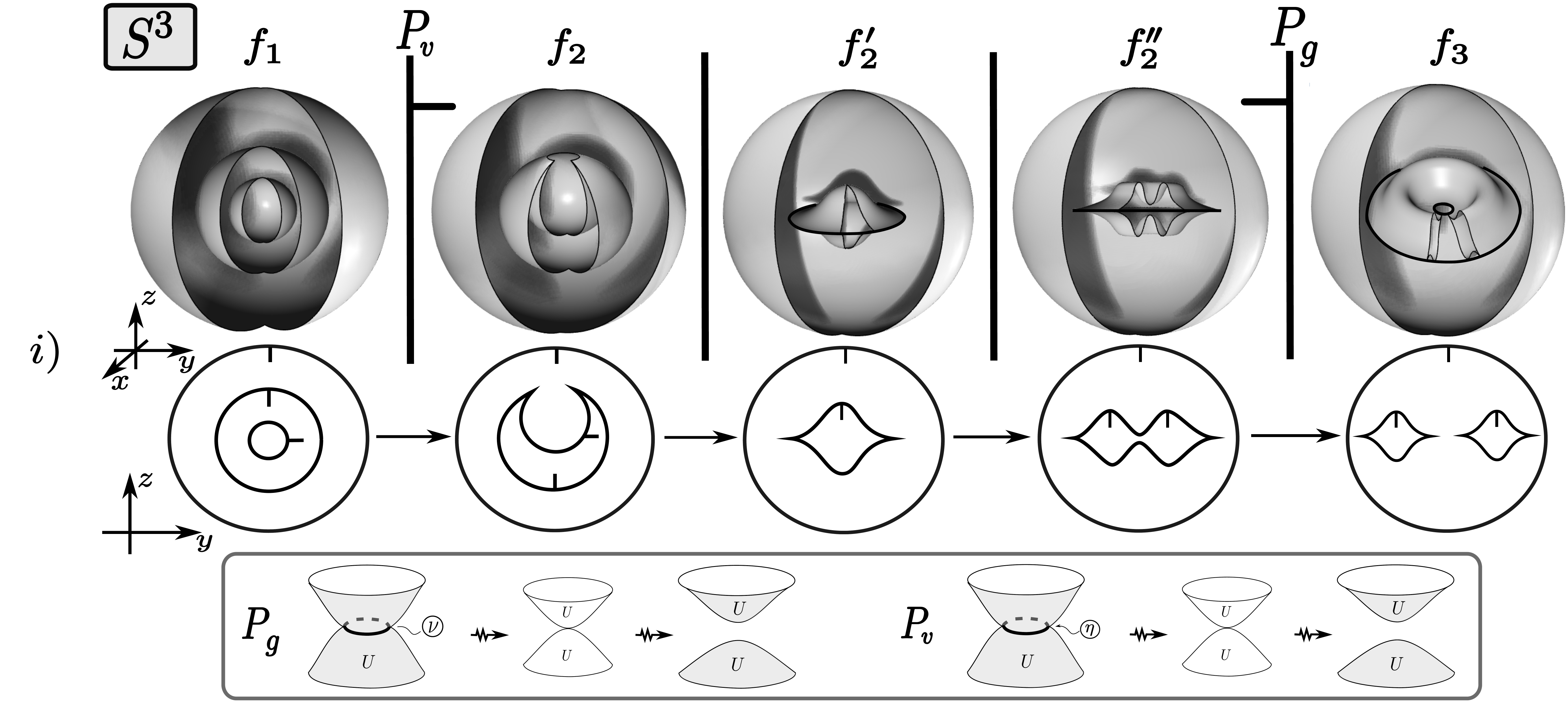} $$
\vspace{-0.47cm}
$$ \epsfxsize=11.1cm \epsfbox{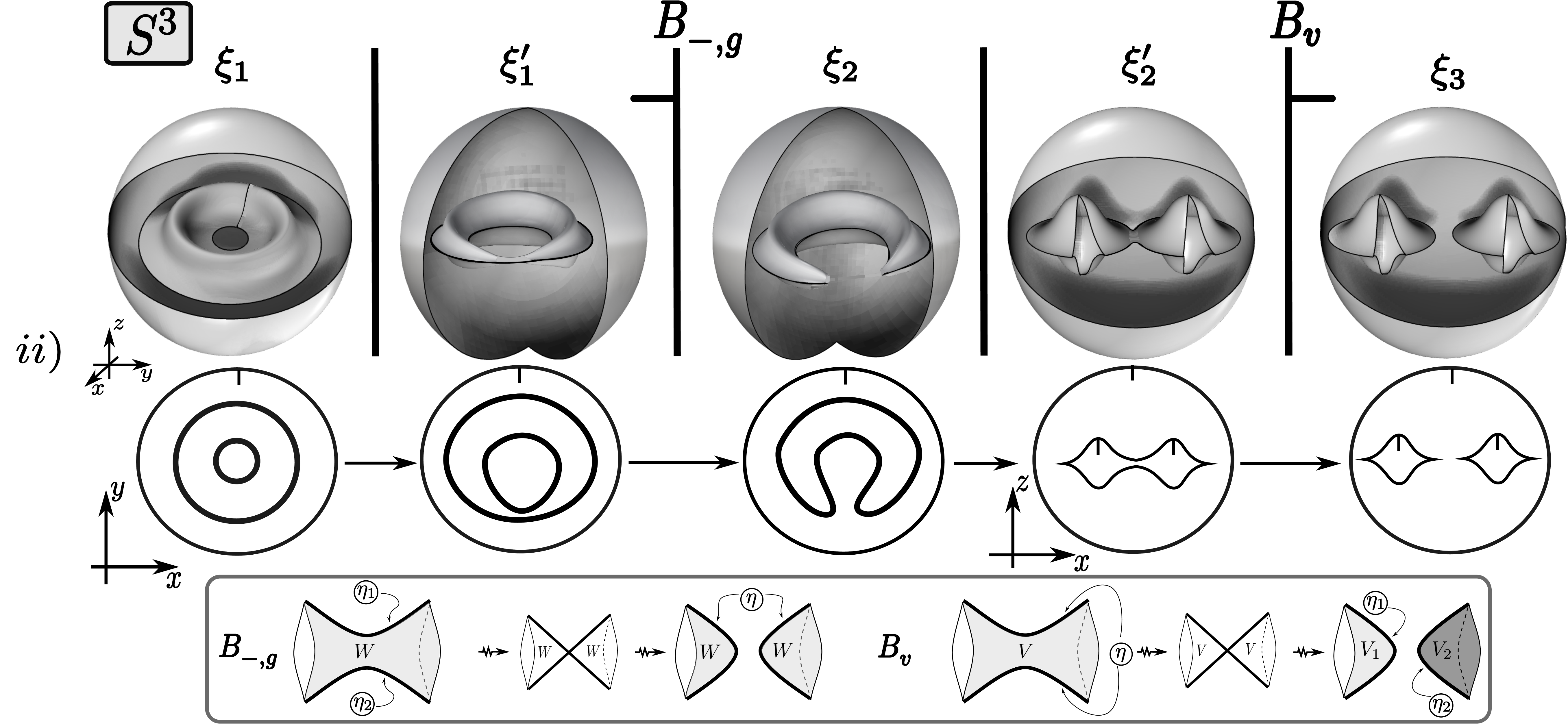} $$
\vspace{-0.47cm}
$$ \epsfxsize=11.1cm \epsfbox{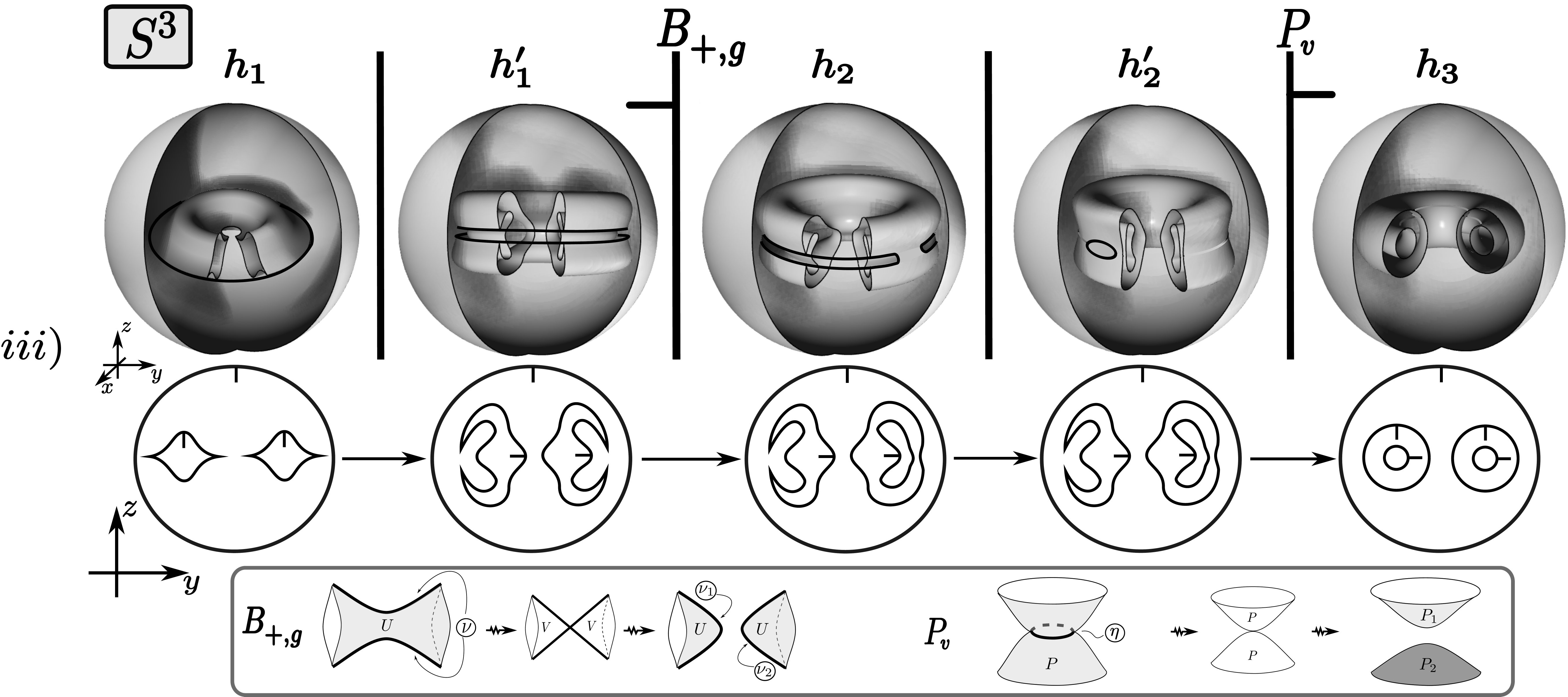} $$
\vspace{-0.7cm}
\caption{Example of transitions $ B_{+,g} $, $ B_{-,g} $, $B_v$, $P_g$ and $P_v$. \label{apliS3tor3b}}
\end{figure}
In \cite{MSR, SR}, the authors show that, if $ \Omega $ is union of a finite set of disjoint compact, oriented surfaces embedded in $S^3$, then there exists a stable map $f\colon S^3\longrightarrow \mathbb{R}^3$ such that  $\Sigma f=\Omega$. It is not straightforward to construct such maps. 

\vspace{0.3cm}
Figure \ref{apliS3tor3b} shows some examples of sequences of stable maps, in which the invariants  $I_E,I_C,I_G,I_S$ are less or equal than $3$, with respect to the pair $ (\Omega, S^3)$. Part $i)$ starts with the map $f_1$ such that $(I_E,I_C,I_G,I_S)(f_1)=(3,0,0,0)$. Now, $f_2$ is obtained from the map $f_1$ such that the path joining them crosses  the transition  $P_v$ in positive direction, so that we get $(I_E,I_C,I_G,I_S)(f_2)=(2,1,0,0)$. Hence, $f_3$ is obtained from the map $f_2$ such that the path joining them crosses the transition  $P_g$ in negative direction, so that we get $(I_E,I_C,I_G,I_S)(f_3)=(2,2,1,0)$. In the same way, we construct Part  $ii)$ and $iii)$ in Figure \ref{apliS3tor3b}.
\begin{table}[htp]
\vspace{-0.2cm}
$$ \epsfxsize=12.1cm \epsfbox{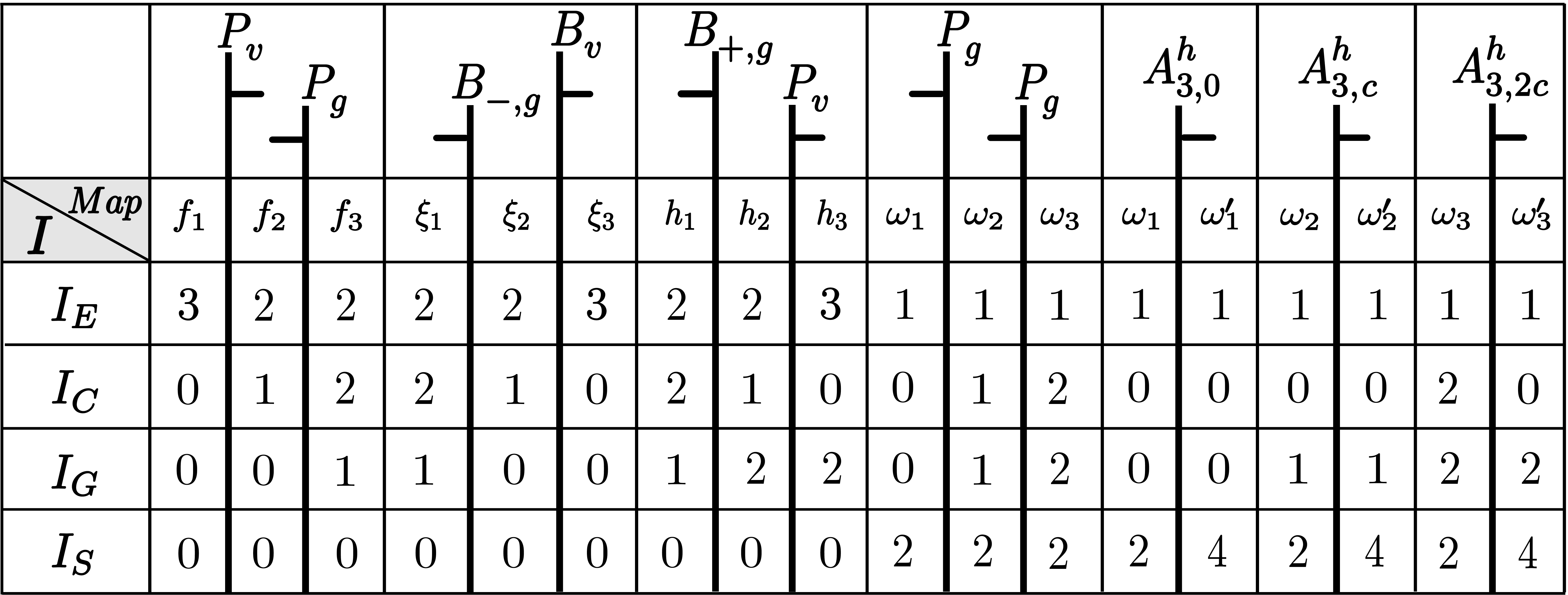} $$
\vspace{-0.3cm}
\caption{Summary of the transitions in Figure \ref{transcolagolondrina1} and \ref{apliS3tor3b}.\label{ejemploconstruccion} }
\end{table} 

\noindent
Table \ref{ejemploconstruccion} presents a summary of the effects of the transitions  $ B_{\pm,g} $, $B_v$, $P_g$  and $P_v$ in the invariants $I_E,I_C,I_G,I_S$, obtained from the maps constructed in Figure \ref{transcolagolondrina1} and Figure \ref{apliS3tor3b}.
\begin{prop} \label{propA}
Let  $\{S_i\}_{i=1}^{m}$ be a collection of $m$ disjoint surfaces embedded in  $S^3$ without self-intersections. Put $q=\sum_{i=1}^{m}g(S_i)$, where $g(S_i)$ is the genus of $S_i$. 
\begin{enumerate}
\item[ $(a)$] There exists a map $f\colon S^3\longrightarrow\mathbb{R}^3$ such that $\Sigma f=\bigcup_{i=1}^{m} S_i$, $I_C(f)= q+m-1$, $I_G(f)=q$ and $I_S(f)=0$.
\item[$(b)$] If $m=2n+1$, then there exists a map $f'\colon S^3\longrightarrow\mathbb{R}^3$ such that $\Sigma f'=\bigcup_{i=1}^{2n+1} S_i$, $I_C(f')=0$, $I_G(f')=q$ and $I_S(f')= 2n+2q$.
\end{enumerate}
\noindent 
In each case, one may alter $I_C$ and $I_S$ without altering $I_E$ and $I_G$.
\end{prop}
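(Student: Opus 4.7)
The strategy is to start with a stable map $g\colon S^3\to \mathbb{R}^3$ whose singular set equals $\bigcup_{i=1}^{m} S_i$—existence of such a $g$ is the realisation result of \cite{MSR, SR}—and then reach the prescribed invariants by using only those transitions in $\mathcal{T}$ that preserve the singular set, hence preserve $I_E=m$ and $I_G=q$. Of the eleven transitions listed in \eqref{transall}, inspection of Table~\ref{tablaLBPAeh} singles out $A^e_3$, $A^h_{3,0}$, $A^h_{3,c}$ and $A^h_{3,2c}$: the first two alter only $I_S$ by $\pm 2$, while $A^h_{3,c}$ (resp.\ $A^h_{3,2c}$) shifts $(I_C,I_S)$ by $(-1,+2)$ (resp.\ $(-2,+2)$). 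Performing $A^h_{3,c}$ and then $A^e_3$ in opposite directions therefore shifts $I_C$ by $\pm 1$ while leaving $I_S$ unchanged. This already proves the final sentence of the proposition, and incidentally shows that $I_S$ is always even since every swallowtail is born or killed in a pair.

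For part (a), I would first apply $A^e_3$ in the negative direction to reduce $I_S(g)$ from its (even) initial value down to $0$, and then use the composite $(A^h_{3,c}, A^e_3)$ of the previous paragraph, with signs chosen appropriately, to bring $I_C$ to the target value $q+m-1$ in steps of $\pm 1$. Since none of these steps touches $I_E$ or $I_G$, the resulting map satisfies $(I_E,I_C,I_G,I_S)=(m,\,q+m-1,\,q,\,0)$, as required.

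For part (b), I would start with the map $f$ just produced in (a), for which $I_C(f)=q+m-1=q+2n$ under the parity hypothesis, and apply $A^h_{3,2c}$ in the positive direction $n$ times together with $A^h_{3,c}$ in the positive direction $q$ times. The resulting total increments are
\begin{equation*}
\Delta I_C \;=\; -2n-q \;=\; -(q+m-1), \qquad \Delta I_S \;=\; 2n+2q,
\end{equation*}
so the final map $f'$ satisfies $I_C(f')=0$ and $I_S(f')=2n+2q$, while $I_E(f')=m$ and $I_G(f')=q$ are preserved. The hypothesis $m=2n+1$ is precisely what makes the total $A^h_{3,\bullet}$ reduction of $I_C$ coincide with the starting value $q+m-1$ using only these two transitions in non-negative multiplicities.

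The step I expect to be the main obstacle is not the bookkeeping but the geometric realisability at each transition: one has to check that the current branch set contains the local model needed for the next move—two tangentially meeting cuspidal curves for $A^h_{3,2c}$, a cuspidal curve tangent to a cuspidal edge for $A^h_{3,c}$, and a fold patch on a singular surface on which a cuspidal lip can be born for $A^e_3$. Each of these local configurations can be engineered by an isotopy supported in a small ball disjoint from the rest of the branch set, and may be undone by the inverse transition once it has served its purpose, so a careful but routine verification shows that the sequence of transitions above can indeed be carried out inside the class of stable maps with singular set $\bigcup_{i=1}^{m} S_i$.
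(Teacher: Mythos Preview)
Your route differs from the paper's. The paper works constructively from the canonical projection $f_0$: for~(a) it applies $L$ in the positive direction $m-1$ times (producing $m-1$ new spherical components, each carrying one cuspidal curve) and then $P_g$ in the negative direction $q$ times (each application adding a handle together with a new cuspidal curve); for~(b) it runs $L$, $P_v$, $A^e_3$, $P_g$ and $A^h_{3,c}$ in sequence. At every stage the local model required for the next transition has just been created and is explicitly present. You instead invoke the abstract realisation theorem of \cite{MSR,SR} to get some $g$ with $\Sigma g=\bigcup_i S_i$ and then try to steer $(I_C,I_S)$ to the target using only $A_3$-type moves.

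The gap is the very first adjustment in~(a): running $A^e_3$ in the negative direction to annihilate the swallowtails of $g$. That transition is only available when the two swallowtails bound a short cuspidal arc in lip configuration. For an unspecified $g$ you have no information about its cuspidal graph: a pair of swallowtails may lie on different components of $\Sigma g$, be interleaved with other swallowtails along the same edge, or sit on an edge that wraps non-trivially around a handle. None of these situations can be converted into a lip by ``an isotopy supported in a small ball'', because such an isotopy cannot change the combinatorial type of the cuspidal graph (which component each swallowtail lies on, their cyclic order along each edge, etc.). The same objection hits the composite $(A^h_{3,c},A^e_3)$ you use to shift $I_C$ by~$\pm 1$: the negative half again needs a prescribed adjacency that cannot be manufactured locally. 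You correctly flag this realisability question in your last paragraph, but it is not routine; what would actually be required is a lemma asserting that any two stable maps with isotopic singular sets can be joined through $A_3$-type strata only, and that is a separate, non-obvious statement. The paper's constructive approach sidesteps the difficulty entirely, since it never has to cancel a singularity whose position it does not already control.
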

\begin{proof} 
Consider a canonical embedding $j$ of $S^3$ into $\mathbb{R}^4$, let $\pi\colon  S^3\longrightarrow\mathbb{R}^3$ be a canonical projection, and write  $f_0= \pi \circ j$. We have $I_E(f_0)=1$ and  $I_G(f_0)=I_C(f_0)= I_S(f_0)=0$. Let us prove ($a$). We construct the map $f$ from the map $f_1$ as described in Table \ref{figdemo}. The map $f_1$ is obtained from the map $f_0$ such that the path joining them crosses $m-1$ times by the transition  $L$ in positive direction, so that $(I_E,I_C,I_G,I_S)(f_1)=(m,m-1,0,0)$, and we have $m-1$ new singular surfaces, each one having a cuspidal curve. Finally, $f$ is obtained from the map $f_1$ such that the path joining them crosses $q$ times the transition $P_g$ in negative direction, through the $m-1$ surfaces created by the previous transitions. Thus, we have $(I_E,I_C,I_G,I_S)(f)=(m,q+m-1,0,0)$. To prove
($b$), we consider again $f_0$ as initial map in order to construct a map $f'$ through a path crossing the maps $f_1,f_2,f_3$ and $f_4$ (see Table \ref{figdemo2}). Indeed, the map $f_1$ is obtained from the map $f_0$, such that the path joining them crosses $n$ times the transition $L$ in positive direction, so that $(I_E,I_C,I_G,I_S)(f_1)=(n+1,n,0,0)$. Hence, $f_2$ is obtained from $f_1$, such that the path joining them crosses $n$ times the transition  $P_g$ in positive direction, so that $(I_E,I_C,I_G,I_S)(f_2)=(2n+1,0,0,0)$. Now, $f_3$ is obtained from $f_2$, such that the path joining them crosses $n$ times the transition  $A_3^e$ in positive direction, so that $(I_E,I_C,I_G,I_S)(f_3)=(2n+1,0,0,2n)$. Hence,  $f_4$  is obtained from $f_3$, such that the path joining them crosses $q$ times the transition  $P_g$ in negative direction, so that $(I_E,I_C,I_G,I_S)(f_4)=(2n+1,0,q,2n)$. Finally, $f'$  is obtained from $f_4$, such that the path joining them crosses $q$ times the  transition  $A^h_{3,c}$ in negative direction, so that $(I_E,I_C,I_G,I_S)(f')=(2n+1,0,q,2n+2q)$. This completes the proof.
\end{proof}
\begin{table}[htp]
\vspace{-0.3cm}
$$ \epsfxsize=12cm \epsfbox{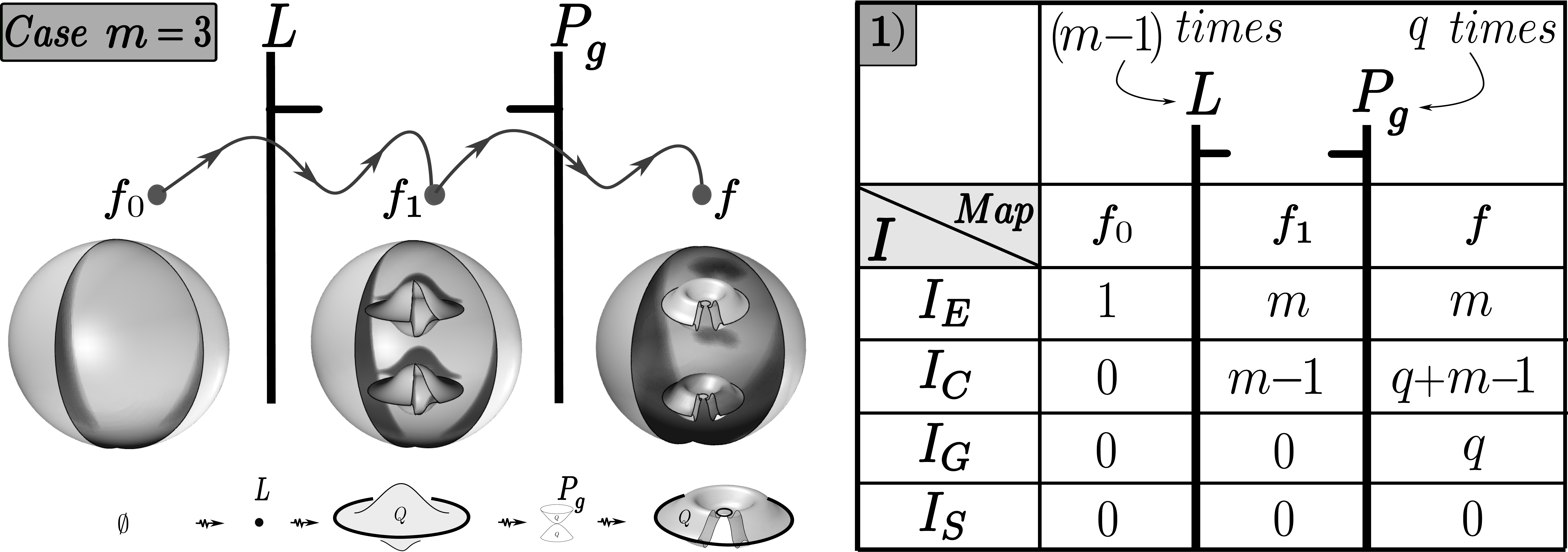} $$
\vspace{-0.2cm}
\caption{Summary of the construction of $f$ of item ($a$) of Proposition \ref{propA}.\label{figdemo} }
\end{table}
\begin{table}[htp]
\vspace{-1.1cm}
$$ \epsfxsize=11.3cm \epsfbox{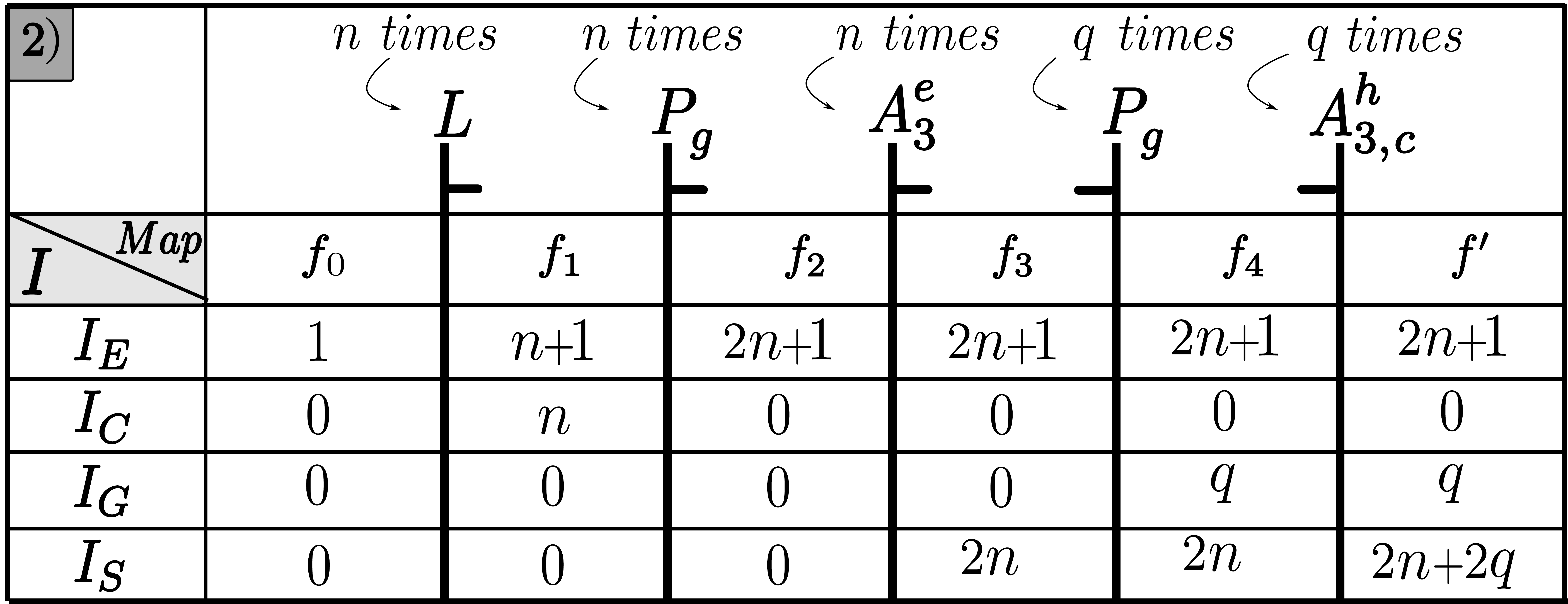} $$
\vspace{-0.2cm}
\caption{Summary of the construction of $f'$ of item ($b$) of Proposition \ref{propA}.\label{figdemo2} }
\end{table}
\vspace{-0.2cm}
\section{Relations between the global invariants}
\label{sec4}
The next result (Theorem \ref{teoA}) determines the increments of the global invariants $I_C,I_S,I_E$ and $I_G$ depending on the decompositions of the local transitions producing the set $\mathcal{T}$ given in \eqref{transall}.
\begin{definition}
Let $T$ be a transition in $\mathcal{T}$ and let us consider a path $F_t$ joining two stable maps.  We say that the {\em local increment} of $T$ is $+1$ (resp. $-1$) if the path $F_t$ passes through $T$ in positive(resp. negative) direction. 
The {\em global increment} of $T$ is the sum of all local increments of $T$. 
We write $$V(\mathcal{T})= \{\ell,b_{-,g},b_{0,g},b_{+,g},b_v,p_g, p_v,a^e_3,a^h_{3,2c},a^h_{3,c},a^h_{3,0}\}$$
for the set of increments corresponding to the transitions in $\mathcal{T}$ with respect to a path joining two stable maps.
\end{definition}

\begin{lemma}\label{Lem1} Let $f_0,f\in\mathcal{E} (S^3,\mathbb{R}^3)$. Then, the increment of $I_E$, $I_C$, $I_G$ and $I_S$ along a path transverse to the transitions of codimension $1$ in $\mathcal{T}$ are given by:
\begin{itemize}
\item[] $\Delta I_E=\ell + b_{v} +p_{v}$,
\item[] $\Delta I_C=\ell-b_{-,g}+b_{+,g}+b_v-p_g-p_v-2a^h_{3,2c}-a^h_{3,c}$,
\item[] $\Delta I_G=-b_{-,g}-b_{0,g}-b_{+,g}-p_g$,
\item[] $\Delta I_S=2(a^e_3+a^h_{3,2c}+a^h_{3,c}+a^h_{3,0})$,
\end{itemize}
\end{lemma}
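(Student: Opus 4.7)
The plan is to reduce the statement to an additive, crossing-by-crossing accounting argument, relying on the local computations already tabulated in Table \ref{tablaLBPAeh}. First I would fix a generic homotopy $F_t\colon S^3\times[0,1]\to\mathbb{R}^3$ with $F_0=f_0$ and $F_1=f$, chosen transverse to the codimension-one discriminant as described at the beginning of Section \ref{sec3}. By the genericity discussed in \cite{Goryunov}, such an $F_t$ exists and meets the discriminant in finitely many times $t_1<\cdots<t_N$, at each of which $F_{t_k}$ realizes exactly one of the eleven transitions of $\mathcal{T}$ in \eqref{transall}; between consecutive $t_k$ the maps are $\mathcal{A}$-equivalent, so the four invariants $I_E,I_C,I_G,I_S$ are locally constant.

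Next I would observe that the total increment of any of the four invariants along $F_t$ telescopes into a sum of local jumps, one per crossing:
\begin{equation*}
\Delta I_*=\sum_{k=1}^{N}\bigl(I_*(F_{t_k+\varepsilon})-I_*(F_{t_k-\varepsilon})\bigr),\qquad *\in\{E,C,G,S\},
\end{equation*}
for small $\varepsilon>0$. At each crossing, the jump is exactly the entry of Table \ref{tablaLBPAeh} for the transition type $T_k\in\mathcal{T}$, multiplied by $+1$ if $F_t$ crosses $T_k$ in positive direction and by $-1$ otherwise. This is the content of the paragraph introducing $\Delta I_E=I_E(f)-I_E(f')$ in positive direction and $\Delta I_E=I_E(f')-I_E(f)$ in negative direction, extended simultaneously to the other three invariants.

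Then I would regroup the sum by transition type. For each $T\in\mathcal{T}$, the signed count of crossings of type $T$ along $F_t$ is precisely the corresponding global increment in $V(\mathcal{T})=\{\ell,b_{-,g},b_{0,g},b_{+,g},b_v,p_g,p_v,a^e_3,a^h_{3,2c},a^h_{3,c},a^h_{3,0}\}$, by the definition of global increment stated just above the lemma. Substituting the row of Table \ref{tablaLBPAeh} for each transition and collecting coefficients gives the four identities: the column $\Delta I_E$ of the table is nonzero only at $L$, $B_v$, $P_v$ (each with coefficient $1$), yielding $\Delta I_E=\ell+b_v+p_v$; the $\Delta I_C$ column produces $\ell-b_{-,g}+b_{+,g}+b_v-p_g-p_v-2a^h_{3,2c}-a^h_{3,c}$; the $\Delta I_G$ column has $-1$ only at $B_{-,g}$, $B_{0,g}$, $B_{+,g}$, $P_g$; and the $\Delta I_S$ column has $+2$ exactly at $A^e_3$, $A^h_{3,2c}$, $A^h_{3,c}$, $A^h_{3,0}$, giving the factor $2$ in the last formula.

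The potential obstacle is establishing that the generic path $F_t$ can indeed be taken transverse to every codimension-one stratum of the discriminant with only the eleven transition types of $\mathcal{T}$ appearing, and that no higher-codimension phenomena contribute to the total. This is however exactly Goryunov's transversality statement in \cite{Goryunov} combined with the decomposition of the three topological transitions $L,B,P$ carried out in Section \ref{sec3}; once this is invoked, the remainder of the argument is the elementary linear algebra described above, so no further difficulty arises.
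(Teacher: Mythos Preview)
Your proposal is correct and follows essentially the same approach as the paper: read off the local increments from Table~\ref{tablaLBPAeh} and sum over the signed crossings of each transition type, using the definition of the global increments in $V(\mathcal{T})$. The paper's proof is in fact considerably terser---it simply states that $\Delta I_E$ picks up $+1$ only at $L$, $B_v$, $P_v$ and cites Table~\ref{tablaLBPAeh} for the remaining three formulas---so your telescoping and transversality remarks amount to spelling out details the paper leaves implicit, not to a different argument.
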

\begin{proof}
From Table \ref{tablaLBPAeh}, $\Delta I_E$ increases by one when it passes through one of the transitions $L$, $B_v$ and $P_v$, but it does not increase at all when it passes through the other transitions. Thus,
$$\Delta I_E=\ell+b_{v}+p_{v}\,.$$
Similarly, from Table \ref{tablaLBPAeh} one verifies the other equalities.
\end{proof}
\begin{theorem}\label{teoA}
If $f\colon S^3\longrightarrow\mathbb{R}^3$ is a stable map, then the invariants $I_E,I_C,I_G$ and $I_S$ 
satisfy the following equality:
\begin{equation}\label{eq00}
 I_E(f)+I_G(f)+I_C(f)+I_S(f)=1+2(\ell+b_{v}-b_{-,g}-p_g+a_{3}^{e}+a_{3,0}^{h})+a_{3,c}^{h}-b_{0,g}.
\end{equation}
\end{theorem}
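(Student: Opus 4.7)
The plan is to start from a canonical base map and track how the quantity $I_E+I_G+I_C+I_S$ evolves along a generic homotopy, using Lemma \ref{Lem1}. Concretely, take $f_0=\pi\circ j\colon S^3\to\mathbb{R}^3$ as in the proof of Proposition \ref{propA}; this base map satisfies $I_E(f_0)=1$ and $I_C(f_0)=I_G(f_0)=I_S(f_0)=0$, so
\[
I_E(f_0)+I_G(f_0)+I_C(f_0)+I_S(f_0)=1.
\]
The point is that equality \eqref{eq00} holds for $f_0$ with all increments $\ell,b_{\pm,g},b_{0,g},b_{v},p_{g},p_{v},a^e_3,a^h_{3,\ast}$ equal to zero.

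Next I would invoke the fact, recalled in Section \ref{sec3}, that $C^{\infty}(S^{3},\mathbb{R}^3)$ is path-connected and that any two stable maps can be joined by a generic path $F_t$ that meets the discriminant transversally through finitely many codimension-one transitions belonging to $\mathcal{T}$. Fix such a path between $f_0$ and $f$, and apply Lemma \ref{Lem1} to the global increments $\ell, b_{-,g}, b_{0,g}, b_{+,g}, b_v, p_g, p_v, a^e_3, a^h_{3,2c}, a^h_{3,c}, a^h_{3,0}$ realised by this path.

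The core of the argument is then a purely algebraic computation: add the four identities given by Lemma \ref{Lem1}. Carrying out the sum, the terms $p_v$, $b_{+,g}$, and $2a^h_{3,2c}$ cancel, and the coefficients of the remaining increments consolidate to yield
\[
\Delta I_E+\Delta I_C+\Delta I_G+\Delta I_S = 2\bigl(\ell+b_v-b_{-,g}-p_g+a^e_3+a^h_{3,0}\bigr)+a^h_{3,c}-b_{0,g}.
\]
Adding the base value $1$ for $f_0$ gives exactly \eqref{eq00}.

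The only nontrivial point is the well-definedness: one must argue that the right-hand side depends only on $f$ (and the chosen base $f_0$), not on the particular generic path. This follows because the left-hand side $I_E(f)+I_G(f)+I_C(f)+I_S(f)$ is manifestly a path-independent invariant of $f$, so the combination of global increments on the right is forced to be path-independent as well. Thus the main obstacle is not the algebra but making sure that the appeal to the existence of a transverse path joining $f_0$ to $f$ crossing only transitions in $\mathcal{T}$ is justified; this is guaranteed by Goryunov's classification of codimension-one strata combined with the standard genericity of paths in $C^{\infty}(S^{3},\mathbb{R}^3)$ recalled at the beginning of Section \ref{sec3}.
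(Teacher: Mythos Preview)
Your proof is correct and follows essentially the same approach as the paper: start from the canonical projection $f_0$, connect to $f$ by a generic path, and sum the four increment formulas from Lemma \ref{Lem1}. One small imprecision: a generic path will in general cross codimension-one strata \emph{outside} $\mathcal{T}$ as well (the other transitions in Goryunov's list); the point, as the paper phrases it, is that only the transitions in $\mathcal{T}$ alter $I_E,I_C,I_G,I_S$, so the remaining crossings contribute nothing to the increments.
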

\begin{proof} Let us consider the map $f_0$ as in the proof of Proposition \ref{propA}. If $f\colon S^3\longrightarrow\mathbb{R}^3$ is a stable map, 
then  $f$ may be obtained from $f_0$ through a path passing only by codimension-one transitions. Since only the transitions $L,B,P,A^e_3,A^h_3$ and their  subdivisions alter the invariants $I_E,I_C,I_G$ and $I_S$, we have
\begin{displaymath}
\begin{array}{ll}
 I_E(f)=I_E(f_0)+\Delta I_E=1+\Delta I_E,\;\;\;\;\;\;\;\;\;\;\;& I_G(f)=I_G(f_0)+\Delta I_G=\Delta I_G,\\
 I_C(f)=I_C(f_0)+\Delta I_C=\Delta I_C,\;\;\;\;\;\;\;\;\;\;\;\;\;\;\;\;\;
&I_S(f)=I_S(f_0)+\Delta I_S=\Delta I_S.
\end{array}
\end{displaymath}
From these equalities, we get
$$I_E(f)+I_C(f)+I_G(f)+I_S(f)=1+\Delta I_E+\Delta I_C+\Delta I_G+\Delta I_S\,,$$
and from Lemma \ref{Lem1} we obtain the equality \eqref{eq00}.
\end{proof}
\begin{example}\label{Ej}
Let us consider the map $f$ of Figure \ref{transespecial} and its two swallowtails, say $s_1$ and $s_2$. These swallowtails are created and then eliminated only through the juxtaposition of paths among the nine paths $F_{t}^i$, $i=1,\dots,9$, where:

\begin{itemize}
\item[($i$)] $F_{t}^1$ starts at $\omega_2$, then passes through the transition  $A^h_{3,c}$ in positive direction and reaches $f$, thus creating the swallowtails $s_1$ and $s_2$. Hence, $F_{t}^1$ passes through $A^h_{3,c}$ in negative direction and reaches $f_1$, ($f_2$ or $\omega_2$), thus eliminating the swallowtails $s_1$ and $s_2$. In this case, one has $a_{3,c}^{h}-b_{0,g}=(1-1)-0=0$. 

\item[($ii$)] $F_{t}^2$ starts at $\omega_2$, then passes through the transition  $A^h_{3,c}$ in positive direction and reaches $f$, thus creating the swallowtails $s_1$ and $s_2$. Hence, $F_{t}^2$ passes through $B_{0,g}$ in positive direction to reach $f_3$, then it passes through $A^h_{3,0}$ in negative direction and reaches $f_4$, thus eliminating the swallowtails $s_1$ and $s_2$. In this case, one has $a_{3,c}^{h}-b_{0,g}=+1-(+1)=0$.  

\item[($iii$)] $F_{t}^3$ starts at $\omega_2$, then passes through the transition  $A^h_{3,c}$ in positive direction and reaches $f$, thus creating the swallowtails $s_1$ and $s_2$. Hence, $F_{t}^3$ passes through $B_{0,g}$ in negative direction and reaches  $f_5$, then it passes through $A^h_{3,0}$ in negative direction to reach $f_6$, thus eliminating the swallowtails $s_1$ and $s_2$. In this case, one has $a_{3,c}^{h}-b_{0,g}=+1-(-1)=2$.  

\item[($iv$)]  $F_{t}^4$ starts at $f_3$, then passes through the transition  $B_{0,g}$ in negative direction and reaches $f$, thus creating the swallowtails $s_1$ and $s_2$. Hence, $F_{t}^4$ passes through $B_{0,g}$ in positive direction to come back to $f_3$, then it passes through $A^h_{3,2c}$ in negative direction to reach $f_4$, thus eliminating the swallowtails $s_1$ and $s_2$. In this case, one has $a_{3,c}^{h}-b_{0,g}=0-(-1+1)=0$. 
\begin{figure}[htp]
\vspace{-0.4cm}
$$ \epsfxsize=11.4cm \epsfbox{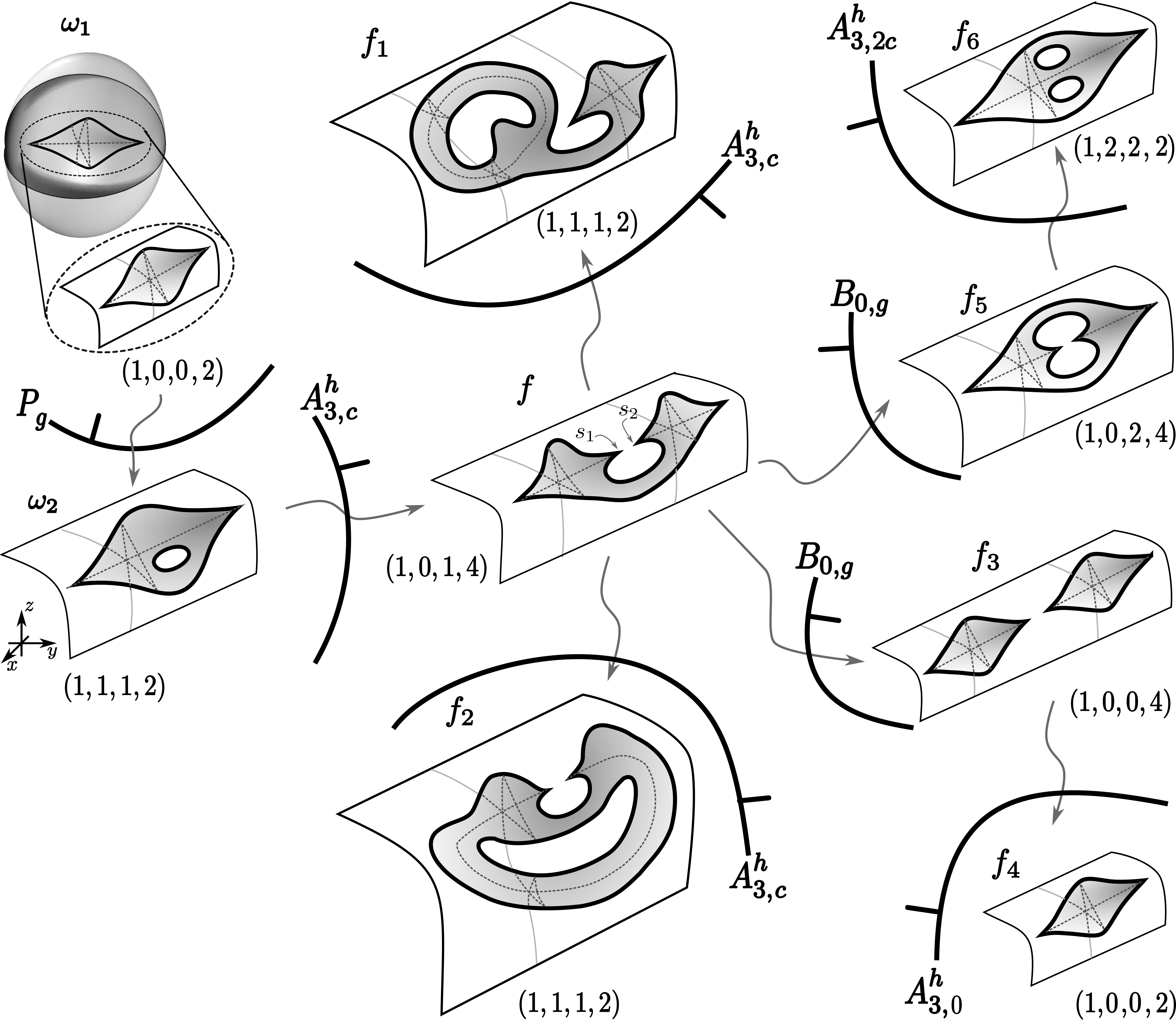} $$
\vspace{-0.8cm}
\caption{Relationship between the transicions $A_{3,c}^{h}$ and $B_{0,g}$. \label{transespecial}}
\end{figure}
\vspace{-0.15cm}
\item[($v$)] $F_{t}^5$ starts at $f_3$, then passes through the transition  $B_{0,g}$ in negative direction and reaches $f$, thus creating the swallowtails $s_1$ and $s_2$. Hence, $F_{t}^5$ passes through $B_{0,g}$ in negative direction to reach $f_5$, then it passes through $A^h_{3,2c}$ in negative direction to reach $f_6$, thus eliminating the swallowtails $s_1$ and $s_2$. In this case, one has $a_{3,c}^{h}-b_{0,g}=0-(-1-1)=2$. 
\item[($vi$)] $F_{t}^6$ starts at $f_3$, then passes through the transition  $B_{0,g}$ in negative direction and reaches $f$, thus creating the swallowtails $s_1$ and $s_2$. Hence, $F_{t}^6$ passes through $A^h_{3,c}$ in negative direction to reach $f_1$, ($f_2$ or $\omega_2$), thus eliminating the swallowtails $s_1$ and $s_2$. In this case, one has $a_{3,c}^{h}-b_{0,g}=-1-(-1)=0$. 
\item[($vii$)]  $F_{t}^7$ starts at $f_5$, then passes through the transition  $B_{0,g}$ in positive direction and reaches $f$, thus creating the swallowtails $s_1$ and $s_2$. Hence, $F_{t}^7$ passes through $B_{0,g}$ in negative direction to reach $f_5$, then it passes through $A^h_{3,2c}$ in negative direction to reach $f_6$, thus eliminating the swallowtails $s_1$ and $s_2$. In this case, one has $a_{3,c}^{h}-b_{0,g}=0-(1-1)=0$. 

\item[($viii$)] $F_{t}^8$ starts at $f_5$, then passes through the transition  $B_{0,g}$ in positive direction and reaches $f$, thus creating the swallowtails $s_1$ and $s_2$. Hence, $F_{t}^8$ passes through $B_{0,g}$ in negative direction to reach $f_3$, then it passes through $A^h_{3,0}$ in negative direction to reach $f_4$, thus eliminating the swallowtails $s_1$ and $s_2$. In this case, one has $a_{3,c}^{h}-b_{0,g}=0-(1+1)=-2$.  

\item[($ix$)]  $F_{t}^9$ starts at $f_5$, then passes through the transition  $B_{0,g}$ in positive direction and reaches $f$, thus creating the swallowtails $s_1$ and $s_2$. Hence, $F_{t}^9$ passes through $A^h_{3,c}$ in negative direction to reach $f_1$ ($f_2$ or $\omega_2$), thus eliminating the swallowtails $s_1$ and $s_2$. In this case, one has $a_{3,c}^{h}-b_{0,g}=-1-(+1)=-2$.  
\end{itemize}  
\end{example}
\begin{remark} 
In the summary of Example \ref{Ej}, it is worth noting that a path passing through $A^h_{3,c}$ in positive direction eliminates all swallowtails, if it passes through  $A^h_{3,c}$ in negative direction or it passes through $B_{0,g}$ in positive (or negative) direction. Similarly, a path passing through $B_{0,g}$ in positive (resp. negative) direction eliminates all swallowtails; if it passes through  $B_{0,g}$ in negative (resp. positive) direction, or it passes through $A^h_{3,c}$ in negative direction.
\end{remark}
\begin{lemma}\label{Lem2} Let $f_0$ be the canonical projection of $S^3$ onto $\mathbb{R}^3$, and let $f\in\mathcal{E} (S^3,\mathbb{R}^3)$. If  $I_S(f)=0$ and $\gamma$ is a path joining $f_0$ and $f$, then
$$a_{3,c}^{h}-b_{0,g}=0\hspace{-0.18cm}\mod 2\,.$$
\end{lemma}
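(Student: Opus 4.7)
My plan is to apply Theorem \ref{teoA} together with Lemma \ref{Lem1} to convert the mod-$2$ congruence into a parity statement about the endpoint $f$, and then verify that parity by exhibiting a second path from $f_0$ to $f$ that avoids $A^h_{3,c}$ and $B_{0,g}$.

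First, because $I_S(f_0)=I_S(f)=0$, Lemma \ref{Lem1} forces $\Delta I_S=2(a^{e}_{3}+a^{h}_{3,2c}+a^{h}_{3,c}+a^{h}_{3,0})=0$ along $\gamma$. Substituting $I_S(f)=0$ into Theorem \ref{teoA} applied to $\gamma$ gives
\[
I_E(f)+I_G(f)+I_C(f)-1 = 2\bigl(\ell+b_{v}-b_{-,g}-p_{g}+a^{e}_{3}+a^{h}_{3,0}\bigr)+a^{h}_{3,c}-b_{0,g},
\]
and reducing mod $2$ yields $a^{h}_{3,c}-b_{0,g}\equiv I_E(f)+I_G(f)+I_C(f)-1\pmod{2}$. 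Thus the conclusion is equivalent to the purely endpoint-dependent parity statement $I_E(f)+I_G(f)+I_C(f)\equiv 1\pmod{2}$; note in particular that this establishes a priori that $a^{h}_{3,c}-b_{0,g}\pmod{2}$ is independent of the path $\gamma$.

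To prove this parity, I would connect $f_0$ to $f$ by an auxiliary path $\gamma_0$ realized entirely by transitions from $\mathcal{T}\setminus\{A^{h}_{3,c},B_{0,g}\}$. For such a path, $a^{h}_{3,c}(\gamma_0)=b_{0,g}(\gamma_0)=0$, so Theorem \ref{teoA} applied to $\gamma_0$ gives $I_E(f)+I_G(f)+I_C(f)-1=2K_{\gamma_0}$, which is even. The existence of $\gamma_0$ is suggested by Proposition \ref{propA}(a) and the constructions of Figures \ref{apliS3tor3b} and \ref{transcolagolondrina1}, where arbitrary disjoint configurations of singular surfaces in $S^{3}$ together with their cuspidal structure are realized using only $L$ and $P_{g}$ transitions (and, more generally, the remaining $I_S$-preserving transitions $B_{\pm,g}$, $B_v$, $P_v$ that preserve the parity of $I_E+I_G+I_C$, as one checks immediately from Table \ref{tablaLBPAeh}).

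The main obstacle is the realizability step: showing that every $f\in\mathcal{E}(S^{3},\mathbb{R}^{3})$ with $I_S(f)=0$ can indeed be joined to $f_0$ by some $\gamma_0\subset\mathcal{T}\setminus\{A^{h}_{3,c},B_{0,g}\}$. This amounts to an interpolation argument on the space of stable maps with $I_S=0$, using the codimension-one transitions classified in Section \ref{sec3} to move between any two prescribed $(\Sigma f, \text{cusp circles})$-configurations in $S^{3}$ without needing the two exceptional transitions. Once this realizability is established, the combination of the two applications of Theorem \ref{teoA} (to $\gamma$ and to $\gamma_0$) closes the argument: the parity of $I_E+I_G+I_C$ at $f$ is forced to be odd, and hence $a^{h}_{3,c}-b_{0,g}\equiv 0\pmod{2}$ along $\gamma$.
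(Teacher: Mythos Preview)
Your reduction via Theorem~\ref{teoA} is correct and is a clean observation: equation~\eqref{eq00} shows that $a^{h}_{3,c}-b_{0,g}\pmod 2$ depends only on the endpoint, namely on the parity of $I_E(f)+I_G(f)+I_C(f)+I_S(f)-1$, so the lemma is equivalent to the statement $I_E(f)+I_G(f)+I_C(f)\equiv 1\pmod 2$ whenever $I_S(f)=0$ (i.e.\ to Corollary~\ref{corP}(a)). This is a genuinely different route from the paper's.

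The paper does \emph{not} try to build an auxiliary path. It works directly with the given $\gamma$: since $I_S(f_0)=I_S(f)=0$, every swallowtail that appears along $\gamma$ is later eliminated, and the transitions $A^{h}_{3,c}$ and $B_{0,g}$ can only occur while open cuspidal edges (hence swallowtails) are present. Example~\ref{Ej} enumerates the local birth--death episodes of a swallowtail pair and checks in each case that the contribution to $a^{h}_{3,c}-b_{0,g}$ is even; the proof of the lemma is then simply that $\gamma$ decomposes into such episodes (together with stretches where neither transition occurs).

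The genuine gap in your proposal is the realizability step, which you flag but do not resolve. Proposition~\ref{propA}(a) and the constructions in Figures~\ref{apliS3tor3b} and~\ref{transcolagolondrina1} only manufacture \emph{some} stable map with a prescribed singular set or prescribed values of $(I_E,I_C,I_G,I_S)$; they do not connect $f_0$ to an \emph{arbitrary} given $f$ while avoiding $A^{h}_{3,c}$ and $B_{0,g}$. Even weakening the target from $f$ to ``some $g$ with the same invariant quadruple'' does not help: the constructions you cite produce only tuples of the special form $(m,\,q+m-1,\,q,\,0)$ together with their modifications by the parity-preserving moves $L,B_{\pm,g},B_v,P_g,P_v$, and for those tuples the oddness of $I_E+I_G+I_C$ is tautological. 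Nothing in your argument rules out the existence of a stable map $f$ with $I_S(f)=0$ that is \emph{not} reachable from $f_0$ by the restricted list of transitions---and that is exactly the case in which the lemma has content. Without an independent proof that every such $f$ is reachable in this way (a statement about the stratification of $C^\infty(S^3,\mathbb{R}^3)$ that is nowhere established in the paper), the scheme is circular: it verifies the parity only for maps already known to satisfy it.
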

\begin{proof}
Since $I_S(f)=0$,  $f$ has no swallowtails. Hence, $f$ may be obtained by a path $\gamma$ composed of paths among the nine paths of Example \ref{Ej}. In each case, we have $a_{3,c}^{h}-b_{0,g}=0 \mod 2$. Now, if $\gamma$ does not contain any such a path, then $a_{3,c}^{h}-b_{0,g}=0-0=0 \mod 2$. 
\end{proof}
\begin{cor}\label{corP}
Let $f\colon S^3\longrightarrow \mathbb{R}^3$ be a stable map.
\begin{enumerate}
\item[($a$)]  If  $I_S(f)=0$, then  
$I_E(f)+I_G(f)+I_C(f)=1\hspace{-0.18cm}\mod 2$.
\item[($b$)] If $f$ is a fold map, then $I_C(f)=I_S(f)=0$ and
$I_E(f)+I_G(f)=1\hspace{-0.18cm}\mod 2$.
\end{enumerate}
\end{cor}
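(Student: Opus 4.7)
The plan is to deduce both statements directly from Theorem \ref{teoA} and Lemma \ref{Lem2} by reducing modulo $2$.

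For part $(a)$, I would start from the equality \eqref{eq00} in Theorem \ref{teoA} and reduce both sides modulo $2$. All terms inside the factor $2(\ell + b_v - b_{-,g} - p_g + a_3^e + a_{3,0}^h)$ vanish mod $2$, so modulo $2$ the equation becomes
\[
I_E(f)+I_G(f)+I_C(f)+I_S(f) \equiv 1 + a_{3,c}^h - b_{0,g} \pmod{2}.
\]
The hypothesis $I_S(f)=0$ lets me drop $I_S(f)$ from the left-hand side, and by Lemma \ref{Lem2} (applied to any path $\gamma$ joining the canonical projection $f_0$ to $f$, which exists by path-connectedness of $\mathcal{E}(S^3,\mathbb{R}^3)$ modulo the discriminant) we have $a_{3,c}^h - b_{0,g} \equiv 0 \pmod{2}$. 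This yields $I_E(f)+I_G(f)+I_C(f) \equiv 1 \pmod{2}$, as required.

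For part $(b)$, I would use that a fold map, by definition, has only fold singularities in its singular locus, so there are no cusp points and in particular no cuspidal curves and no swallowtail points. This gives $I_C(f)=I_S(f)=0$ immediately. Plugging this into the congruence proved in $(a)$ yields $I_E(f)+I_G(f)\equiv 1 \pmod 2$.

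The only step that requires any care is the appeal to Lemma \ref{Lem2}, which has been proved under the hypothesis $I_S(f)=0$ and for a path starting at the canonical projection $f_0$; both conditions are met here, so no extra work is needed. I therefore expect no substantive obstacle: the corollary is essentially a mod-$2$ bookkeeping consequence of the main theorem combined with the parity lemma.
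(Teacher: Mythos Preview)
Your proof is correct and follows essentially the same approach as the paper: reduce equation \eqref{eq00} modulo $2$, use $I_S(f)=0$ together with Lemma~\ref{Lem2} to handle the $a_{3,c}^{h}-b_{0,g}$ term, and then specialize to fold maps for part~($b$). There is no substantive difference between your argument and the paper's.
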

\begin{proof} If $I_S(f)=0,$ then its branch set has no swallowtails and by Lemma \ref{Lem2}, we have $a_{3}^{e}-b_{0,g}=0\mod 2$. Now, by Theorem \ref{teoA} we get
\begin{displaymath}
\begin{array}{ll}
I_E(f)+I_G(f)+I_C(f)&=1+2(\ell+b_{v}-b_{-,g}-p_g+a_{3}^{e}+a_{3,0}^{h})+0\hspace{-0.18cm}\mod 2,\\
&=1\hspace{-0.18cm}\mod 2.
\end{array}
\end{displaymath}
This proves ($a$). Let us now prove ($b$). If  $f$ is a fold map, then its branch set has no cuspidal curves   nor swallowtails, in other words $I_C(f)=I_S(f)=0$, thus replacing this in part ($a$) we have immediately,
\vspace{-0.2cm}
\begin{displaymath}
I_E(f)+I_G(f)=1\hspace{-0.25cm}\mod 2.
\vspace{-0.5cm}
\end{displaymath}
\end{proof}
Let $\{S_i\}_{i=1}^n$ be a collection of $n$ compact, oriented surfaces embedded in $\mathbb{R}^3$. A natural question arises: 

\noindent What are the conditions on the collection $\{S_i\}_{i=1}^n$ in order to form a branch set of a fold map $f\colon S^3\longrightarrow \mathbb{R}^3$?

One necessary condition on the collection $\{S_i\}_{i=1}^n$ is the item ($b$) of Corollary \ref{corP}. In this case, we have $I_E=n$ and $I_G=\sum_{i=1}^ng(S_i)$, hence we get the formula:
\begin{displaymath}
n+\sum_{i=1}^ng(S_i)=1\hspace{-0.2cm}\mod 2\,.
\end{displaymath}
\begin{example}\label{Exp}
Let $\{S_i\}_{i=1}^n$ be a collection of spheres centred in the origin of $\mathbb{R}^3$ with $n$ odd, such that the radio of each $S_i$ is $r_i=i$ for all $i=1,\dots,n$. Then, there is a map $f\colon S^3\longrightarrow \mathbb{R}^3$ such that its branch set coincides with $\{S_i\}_{i=1}^n$, that is $\Sigma f=\bigcup_{i=1}^nS_i$, and the surfaces with radio less or equal than $(n-1)/2$ have outer direction and the surfaces with radio greater than $(n-1)/2$ have inner direction. Figure \ref{apliS3fin1} shows the construction for the case $n=3$.
\end{example}
\begin{figure}[htp]
\vspace{-0.6cm}
$$ \epsfxsize=11.5cm \epsfbox{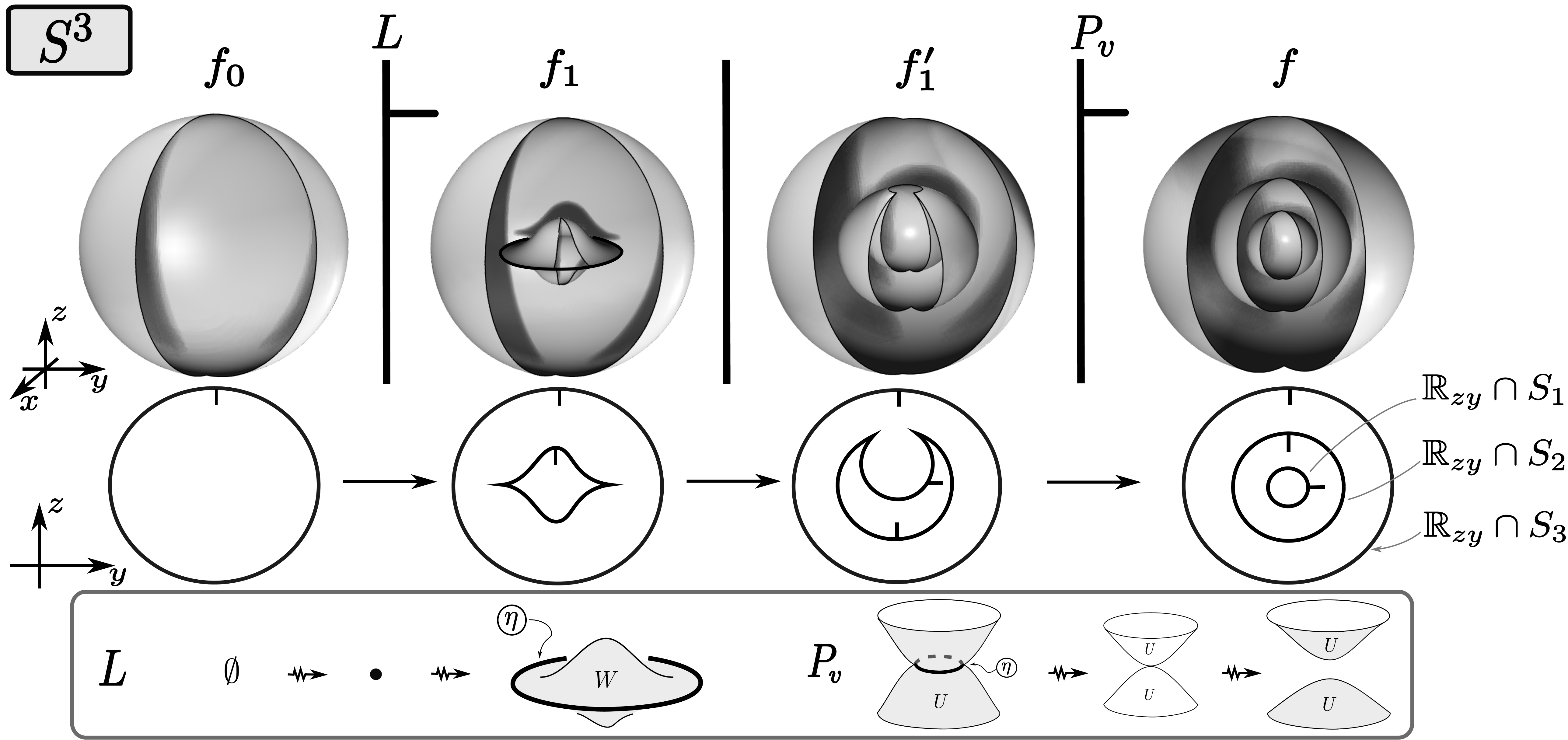} $$
\vspace{-0.8cm}
\caption{Example \ref{Exp}, case $n=3$. \label{apliS3fin1}}
\end{figure}
\begin{example} \label{Exp2}
  Let $\{S_i\}_{i=1}^n$ be a collection of surfaces embedded in $\mathbb{R}^3$ with $n$ odd, such that,
\begin{itemize}
\item[1.] the surface $S_1$ is a sphere embedded in $\mathbb{R}^3$;
\item[2.] the surfaces $\{S_i\}_{i=2}^n$ are disjoint and enclosed by $S_1$, and  the pairs $\{S_{2i}$, $S_{2i+1}\}$ are surfaces one inside the other, and both with genus $k_i\in \mathbb{Z}^+$, for all $i=1,\dots,(n-1)/2$. 
\end{itemize} 
Then, there is a map $f\colon S^3\longrightarrow \mathbb{R}^3$ such that its branch set coincides with $\{S_i\}_{i=1}^n$, that is $\Sigma f=\bigcup_{i=1}^nS_i$. Figure \ref{apliS3fin2} shows the construction for the case $n=3$ and $k_2=2$.
\begin{figure}[htp]
\vspace{-0.4cm}
$$ \epsfxsize=12cm \epsfbox{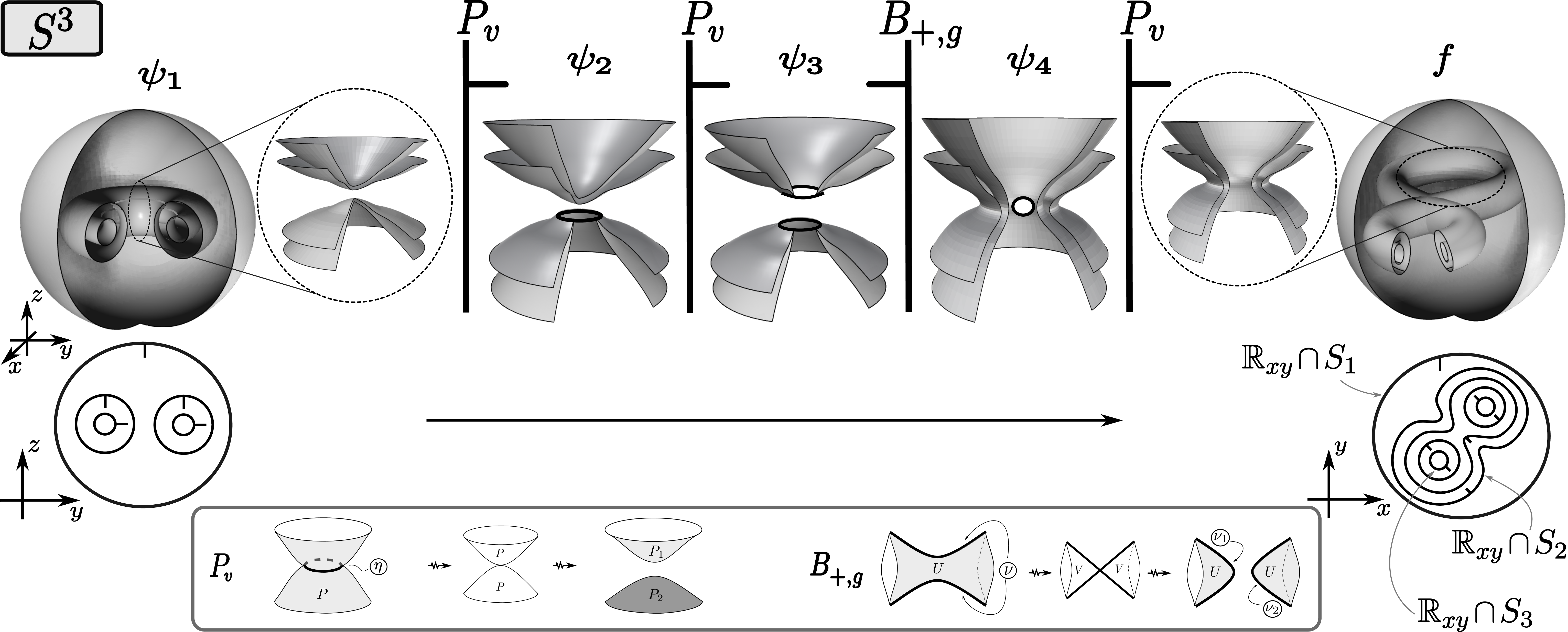} $$
\vspace{-0.8cm}
\caption{Example \ref{Exp2}, case $n=3$. \label{apliS3fin2}}
\end{figure}
\end{example}
In Figure \ref{foldmaps2}, we have some branch sets of fold maps of $S^3$ in $\mathbb{R}^3$ with $I_E=5$ and $I_G\leq 6$, that can be constructed in a similar way as the maps are constructed in Example \ref{Exp} and \ref{Exp2}.
\begin{figure}[htp]
\vspace{-0.3cm}
$$ \epsfxsize=10.7cm \epsfbox{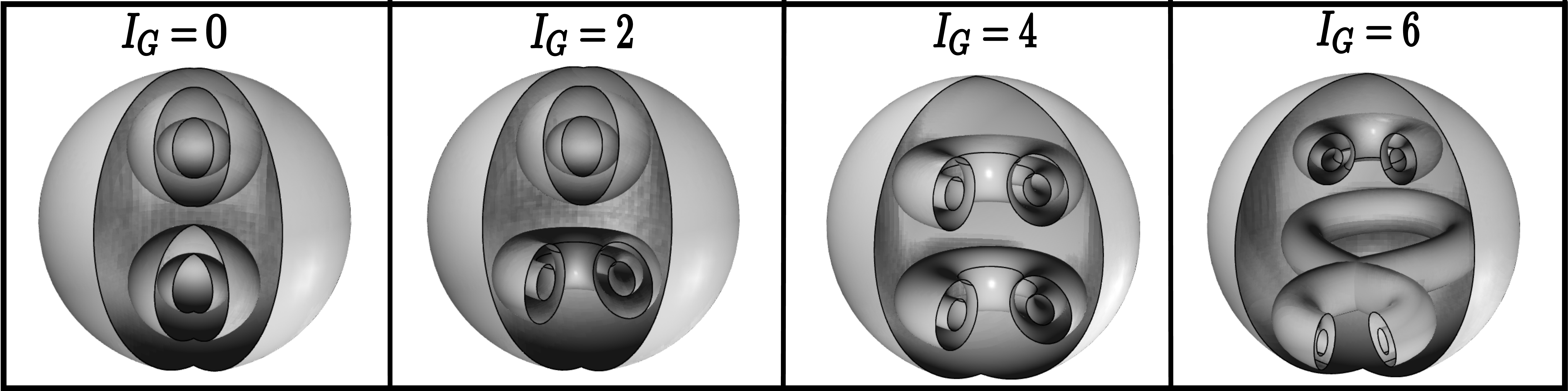} $$
\vspace{-0.8cm}
\caption{Fold maps from $S^3$ to $\mathbb{R}^3$ with $I_E=5$ and $I_G\leq 6$. \label{foldmaps2}}
\end{figure}
\bibliographystyle{amsplain}

\end{document}